\DeclareFontFamily{U}{mathb}{\hyphenchar\font45}
\DeclareFontShape{U}{mathb}{m}{n}{
<-6> mathb6 <6-7> mathb6 <7-8> mathb6
<8-9> mathb6 <9-10> mathb6
<10-12> mathb6 <12-> mathb6
}{}
\DeclareSymbolFont{mathb}{U}{mathb}{m}{n}
\DeclareMathSymbol{\llcurly}{\mathrel}{mathb}{"CE}
\DeclareMathSymbol{\ggcurly}{\mathrel}{mathb}{"CF}
\newcommand{\eg}{e.\,g.,\ }
\newcommand{\A}{\mathcal{A}}
\newcommand{\E}{\mathcal{E}}
\newcommand{\Hn}{ {\mathbb{H}_n} }   
\newcommand{\XWpdc}{ {\mathbb{X}_c^{\raisebox{0.2em}{{\fontsize{3}{2}\selectfont $\succ $}}}} }  
\newcommand{\XWpdpdc}{ {\mathbb{X}_c^{\raisebox{0.2em}{{\fontsize{5}{2}\selectfont $\ggcurly$}}}} }  
\newcommand{\XWpdd}{ {\mathbb{X}_d^{\raisebox{0.2em}{{\fontsize{3}{2}\selectfont $\succ $}}}} }  
\newcommand{\XWpdpd}{ {\mathbb{X}_d^{\raisebox{0.2em}{{\fontsize{5}{2}\selectfont $\ggcurly$}}}} }  
\newcommand{\XWpdpdd}{ {\mathbb{X}_d^{\raisebox{0.2em}{{\fontsize{5}{2}\selectfont $\ggcurly$}}}} }
\newtheorem{remark}{Remark}[section]
\newtheorem{lemma}{Lemma}[section]
\newtheorem{corollary}{Corollary}[section]
\newtheorem{example}{Example}[section]
\newcommand{\M}{{\mathcal M}}
\newcommand{\R}{\mathbb{R}}
\DeclareMathOperator{\vect}{vec}
\DeclareMathOperator{\tr}{tr}
\DeclareMathOperator{\adj}{adj}
\newcommand{\Ricc}{\mathsf{Ricc}}
\DeclareMathOperator{\rank}{rank}
\newcommand {\matr}      [1] {\left[\begin{array}{#1}}
\newcommand {\rix}          {\end{array}\right]}
\newcommand{\ie}{i.\,e.\ }
\newcommand{\imag}{\imath}
\newcommand{\mat}[3][C]{
	\mathbb{#1}^{
		\IfSubStr{#2}{+}{(#2)}{#2}
		\times
		\IfSubStr{#3}{+}{(#3)}{#3}
	}
}
\DeclareDocumentCommand{\matz}{m m O{K} O{z}}{\mathbb{#3}[{#4}]^
	{
		\IfSubStr{#1}{+}{(#1)}{#1}
		\times
		\IfSubStr{#2}{+}{(#2)}{#2}
}}
\newcommand{\diffX}[1][X]{\frac{\partial}{\partial #1}}
\DeclareDocumentCommand{\sp}{m m}{
	\left \langle #1,\,#2\right\rangle
}
\DeclareMathOperator*{\argmax}{\arg\,\max}
\title{Computation of the analytic center of the solution set of the linear matrix inequality arising in continuous- and discrete-time passivity analysis}                                       
\author{D. Bankmann\footnotemark[1], V. Mehrmann\footnotemark[2], Y. Nesterov\footnotemark[3], and P. Van Dooren\footnotemark[4]
}
\date{\today}
\begin{document}
\maketitle

\begin{abstract} In this paper formulas are derived for the analytic center of the solution set of linear matrix inequalities (LMIs) defining passive transfer functions. The algebraic Riccati equations that are usually associated with such systems are related to boundary points of the convex set defined by the solution set of the LMI\@. It is shown that the analytic center is  described by closely related matrix equations, and  their properties are analyzed for continuous- and discrete-time systems. Numerical methods are derived to solve these equations via steepest ascent and Newton-like methods. It is  also shown that the analytic center has nice robustness properties
	when it is used to represent passive systems. The results are illustrated by numerical examples.
\end{abstract}
{\bf Keywords:} Linear matrix inequality, analytic center, passivity, robustness, 
positive real system, algebraic Riccati equation \\
{\bf AMS Subject Classification}: 93D09, 93C05, 49M15, 37J25

\footnotetext[1]{
	Institut f\"ur Mathematik MA 4-5, TU Berlin, Str.\ des 17.\ Juni 136,
	D-10623 Berlin, FRG.
\url{bankmann@math.tu-berlin.de}. Supported by the German Research Foundation DFG as part of the project `Distributed Dynamic Security Control in Next-Generation Electrical Power Systems' with the project identification number 361092219 of the priority program `DFG SPP 1984 - Hybrid and multimodal energy systems: System theory methods for the transformation and operation of complex networks'. }
\footnotetext[2]{
	Institut f\"ur Mathematik MA 4-5, TU Berlin, Str.\ des 17.\ Juni 136,
	D-10623 Berlin, FRG.
	\url{mehrmann@math.tu-berlin.de}. Supported by {\it the German Federal Ministry of Education and Research BMBF within the project EiFer}
	and by {\it Deutsche Forschungsgemeinschaft},
through TRR 154 'Mathematical Modelling, Simulation and Optimization using the Example of Gas Networks'.}
\footnotetext[3]{
	Department of Mathematical Engineering, Universit\'e catholique de Louvain, Louvain-La-Neuve, Belgium.
\url{yurii.nesterov@uclouvain.be}. Supported by ERC Advanced  Grant 788368.}
\footnotetext[4]{
	Department of Mathematical Engineering, Universit\'e catholique de Louvain, Louvain-La-Neuve, Belgium.
	\url{paul.vandooren@uclouvain.be}. Supported  by {\it Deutsche Forschungsgemeinschaft},
	through TRR 154 'Mathematical Modelling, Simulation and Optimization using the Example of Gas Networks'.
}

\section{Introduction}
We consider realizations of linear dynamical systems that are denoted as \emph{positive real or passive} and their associated transfer functions. In particular, we study positive  transfer functions  which play a fundamental role in systems and
control theory: they represent \eg spectral density functions of
stochastic processes, show up in spectral factorizations, are the Hermitian part of a
positive real transfer function, characterize port-Hamiltonian systems, and are also
related to algebraic Riccati equations.

Positive transfer functions form a convex set, and this property has lead to the extensive use of convex optimization techniques
in this area (especially for so-called linear matrix inequalities
\cite{BoyEFB94}).  In order to optimize a certain scalar function
$f(X)$ over a convex set, one often defines a barrier function $b(X)$ that becomes infinite near
the boundary of the set, and then finds the minimum of $c \cdot f(X)+b(X)$,
$c \succeq 0$, as $c \rightarrow +\infty$. These minima (which are functions of the
parameter $c$) are called the points of the {\em central path}.
The starting point of this path ($c = 0$) is called the {\em analytic center} \/of the set.

In this paper we present an explicit set of equations that define the analytic center of the solution set of the linear matrix inequality defining a passive transfer
function. We also show how these equations relate to the algebraic
Riccati equations that typically arise in the spectral factorization of
transfer functions.
We discuss transfer functions both on the imaginary axis (\ie the continuous-time case), as well as on the unit circle (\ie the discrete-time case).
In the continuous-time setting the transfer function arises from the \emph{Laplace transform} of the  system
\begin{equation} \label{statespace_c}
	\begin{array}{rcl} \dot x & = & Ax + B u,\ x(0)=0,\\
		y&=& Cx+Du,
	\end{array}
	\end{equation}
	where $u:\mathbb R\to\mathbb{C}^m$,   $x:\mathbb R\to \mathbb{C}^n$,  and  $y:\mathbb R\to\mathbb{C}^m$  are vector-valued functions denoting, respectively, the \emph{input}, \emph{state},
	and \emph{output} of the system. Denoting real and complex $n$-vectors ($n\times m$ matrices) by $\mathbb R^n$, $\mathbb C^{n}$ ($\mathbb R^{n \times m}$, $\mathbb{C}^{n \times m}$), respectively, the coefficient matrices satisfy $A\in \mathbb{C}^{n \times n}$,   $B\in \mathbb{C}^{n \times m}$, $C\in \mathbb{C}^{m \times n}$, and  $D\in \mathbb{C}^{m \times m}$.

	In the discrete-time setting the transfer function arises from the \emph{z-transform} applied to the system
	\begin{equation} \label{statespace_d}
		\begin{array}{rcl} x_{k+1} & = & Ax_k + B u_k,\ x_0=0,\\
			y_k&=& Cx_k+Du_k,
		\end{array}
		\end{equation}
		with state, input, and output sequences $\{x_k\}$, $\{u_k\}$, $\{y_k\}$.
		In both cases, we usually denote these systems by four-tuples of matrices $\M:=\left\{A,B,C,D\right\}$ and the associated transfer functions by
		\begin{equation} \label{ABCD}
			\mathcal T_c(s):=D+C(sI_n-A)^{-1}B, \qquad \mathcal T_d(z):=D+C(z I_n-A)^{-1}B,
		\end{equation}
		respectively.

		We restrict ourselves to systems which are \emph{minimal}, \ie the pair $(A,B)$ is \emph{controllable}  (for all $\lambda\in \mathbb C$, $\rank \mbox{\small $[\,\lambda I-A \ B\,]$} =n$), and the pair $(A,C)$ is \emph{reconstructable} (\ie $(A^\mathsf{H},C^\mathsf{H})$ is controllable). Here, the conjugate transpose (transpose) of a vector or matrix $V$ is denoted by
		$V^{\mathsf{H}}$ ($V^{\mathsf{T}}$) and the identity matrix is denoted by $I_n$ or $I$ if the dimension is clear. We furthermore require that input and output port dimensions are equal  to $m$ and assume that $\rank B=\rank C=m$.

		\emph{Passive} systems and their relationships with \emph{positive-real transfer functions} are well studied, starting with the works  \cite{Kal63,Pop73,Wil71,Wil72a,Wil72b,Yak62} and the topic has recently received a revival in the work on \emph{port-Hamiltonian (pH) systems}, \cite{Sch04,SchJ14}. For a summary of the relationships see \cite{BeaMV19, Wil71}, where also the characterization of  passivity via the solution set of an associated \emph{linear matrix inequality (LMI)} is highlighted.

		The paper is organized as follows. After some preliminaries in Section~\ref{sec:prelim}, in Section~\ref{sec:ana_center} we study the analytic centers of the solution sets of LMIs associated with the continuous- and discrete-time
		case. In Section~\ref{sec:numerical} we discuss numerical methods to compute the analytic centers  using steepest ascent
		as well as  Newton-like methods and show that the analytic centers can be computed efficiently. In Section~\ref{sec:passivity} lower bounds for the distance to non-passivity (the passivity radius) are derived using smallest eigenvalues of the
		Hermitian matrices associated with the linear matrix inequalities evaluated at the analytic center. The results are illustrated with some simple examples where the analytic center can be calculated analytically.
		In Appendix~\ref{der_complex_functions} we derive formulas for the computation of the gradients and the Hessian of the functions that we optimize and in Appendix~\ref{appendix:cayley} we clarify some of the differences that arise between the continuous- and the discrete-time case.

		\section{Preliminaries}\label{sec:prelim}
		Throughout this article we will use the following notation.
		We denote the set of Hermitian matrices in $\mathbb{C}^{n \times n}$ by $\Hn$.
		Positive definiteness (semidefiniteness) of  $A\in \Hn$ is denoted by $A\succ 0$ ($A\succeq 0$).
		The real and imaginary parts of a complex matrix $Z$ are written as $\Re (Z)$ and $\Im (Z)$, respectively, and $\imath$ is the imaginary unit.
		%
		%
		We consider functions over $\Hn$,
		which is a vector space if considered as a \emph{real} subspace of $\mat[R]nn+\imath \mat[R]nn$.
		We will identify $\mat[C]mn$ with $\mat[R]mn+\imath \mat[R]mn $, but we note that this has implications when one is carrying out differentiations, see  Appendix~\ref{der_complex_functions}. 
		The \emph{Frobenius scalar product}  for matrices $X, Y\in\mat[R]nn+\imath \mat[R]nn $ is given by
		\begin{equation}
			\sp{X}{Y}_\R:= \Re(\tr(A^{\mathsf{H}}B)) = \tr(Y_r^TX_r+Y_i^TX_i),
		\end{equation}
		where we have partitioned $X,Y$ as $X=X_r+\imath X_i$, $Y=Y_r+\imath Y_i$ with real and imaginary parts in $\mathbb{R}^{n\times n}$.
		As we are mainly concerned with this scalar product, we will drop the subscript $\R$.
		%
		We will make frequent use of the following properties of this inner product given by
		\begin{equation}
			\label{eq:propprod} \sp{X}{Y}=\sp{Y}{X}, \, \|X\|_F=\sp{X}{X}^{\frac{1}{2}}, \, \sp{X}{YZ}=\sp{Y^{\mathsf{H}}X}{Z}=\sp{XZ^{\mathsf{H}}}{Y}.
		\end{equation}

		The concepts of \emph{positive-realness} and \emph{passivity} are well studied. In the following subsections we briefly recall some important properties following \cite{GenNV99,Wil71}, where we repeat a few observations from \cite{BeaMV19}. See also \cite{Wil71} for a more detailed survey.

		\subsection{Positive-realness and passivity, continuous-time}

		Consider a continuous-time system $\M $ as in (\ref{statespace_c})  and the transfer function $\mathcal T_c$ as in \eqref{ABCD}. The transfer function $\mathcal T_c(s)$ is called {\em positive real} \/if the 
		matrix-valued rational function
		\[ 
			\Phi_c(s):= \mathcal T_c^{\mathsf{H}}(-s) + \mathcal T_c(s)
		\] 
		is positive semidefinite for $s$ on the imaginary axis, \ie $\Phi_c(\imath\omega)\succeq 0$
		for all $\omega\in \mathbb{R}$ and it is called \emph{strictly positive real} if $\Phi_c(\imath \omega)\succ 0$ for all $\ \omega\in \mathbb{R}$.

		{We associate with $\Phi_c$ a system pencil
		\begin{equation} \label{statespace}
			S_{c}(s) :=
			\left[ \begin{array}{cc|c} 0 & A-sI_n & B \\
			A^{\mathsf{H}}+sI_n & 0 & C^{\mathsf{H}} \\ \hline B^{\mathsf{H}} & C & R  \end{array} \right],
		\end{equation}
		where $R:=D+D^{\mathsf{H}}$. Here \eqref{statespace} has a Schur complement which is the transfer function
		$\Phi_c(s)$ and the generalized eigenvalues of $S_{c}(s)$ are the zeros of $\Phi_c(s)$.

		For  $X \in \Hn$ we introduce the matrix function
		\begin{equation} \label{prls}
			W_c(X) := \left[
				\begin{array}{cc}
					-X\,A - A^{\mathsf{H}}X & C^{\mathsf{H}} - X\,B \\
					C- B^{\mathsf{H}}X & D+D^{\mathsf{H}}
				\end{array}
			\right],
		\end{equation}
		%
		%
		%
		%
		%
		%
		%
		If $\mathcal T_c(s)$ is  positive real, then
the linear matrix inequality (LMI)
		\begin{equation} \label{KYP-LMI}
			W_c(X) \succeq 0
		\end{equation}
has a solution 	$X\in \Hn$
and we have the sets}
		\begin{subequations}\label{LMIsolnsets}
			\begin{align}
&\XWpdc:=\left\{ X\in \Hn \left|   W_c(X) \succeq 0,\ X \succ 0 \right.\right\}, 
\label{XpdsolnWpsd} \\[1mm]
&\XWpdpdc :=\left\{ X\in \Hn \left|   W_c(X) \succ  0,\ X \succ 0 \right.\right\}. \label{XpdsolWpd}
\end{align}
			\end{subequations}
			%
			An important subset of $\XWpdc$ are those solutions to (\ref{KYP-LMI}) for which the
			rank $r$ of $W_c(X)$ is minimal ({\ie} for which $r=\rank\Phi_c(s)$).
			If $R$ is invertible, then
			the minimum rank solutions in $\XWpdc$
			are those for which $\rank W_c(X) = \rank (R)  = m$, which in turn is the case
			if and only if the Schur complement of $R$ in $W_c(X)$ is zero.  This Schur
			complement is associated with the continuous-time \emph{algebraic Riccati equation (ARE)}
			\begin{equation}
				\mathsf{Ricc}_c(X) := -XA-A^{\mathsf{H}}X  -(C^{\mathsf{H}}-XB)R^{-1}(C-B^{\mathsf{H}}X)=0.\label{riccatic}
			\end{equation}
			Solutions $X$ to (\ref{riccatic}) produce a spectral factorization of $\Phi_c(s)$, and each solution corresponds to a
			\emph{Lagrangian invariant subspace} spanned by the columns of $U_c:=\matr{cc} I_n & -X^{\mathsf{T}} \rix^{\mathsf{T}} $
			that remains invariant under the action of the \emph{Hamiltonian matrix}
			\begin{equation}\label{HamMatrix}
				{\mathcal H}_c:=\matr{cc} A-B R^{-1} C & - B R^{-1} B^{\mathsf{H}} \\
				C^{\mathsf{H}} R^{-1} C & -(A-B R^{-1} C)^{\mathsf{H}} \rix,
			\end{equation}
			\ie $U_c$ satisfies ${ \mathcal H}_cU_c=U_c A_{F_c}$ for a \emph{closed loop matrix} $A_{F_c}=A-BF_c$ with $F_c := R^{-1}(C-B^{\mathsf{H}}X)$ (see e.g., \cite{FreMX02}).
			%
			%
			Each solution $X$ of~\eqref{riccatic}  can also be associated with an \emph{extended Lagrangian invariant subspace}
			for the pencil $S_{c}(s)$ (see \cite{BenLMV15}), spanned by the columns of
			$ \widehat{U}_c:=\matr{ccc} -X^{\mathsf{T}}
& I_n & -F_c^{\mathsf{T}}  \rix^{\mathsf{T}}$.
In particular, $\widehat{U}_c$ satisfies
\[
	\left[ \begin{array}{ccc} 0 & A & B \\
	A^{\mathsf{H}} & 0 & C^{\mathsf{H}} \\  B^{\mathsf{H}} & C & R  \end{array} \right] \widehat{U}_c
	=\left[ \begin{array}{ccc} 0 & I_n & 0\\
	-I_n & 0 & 0\\ 0 & 0 & 0 \end{array} \right] \widehat{U}_c A_{F_c}.
\]
%
%
%
The sets $\XWpdc, \XWpdpdc$ are related to the concepts of \emph{passivity and strict passivity} see  \cite{Wil71}.
If for the system $\M:=\left\{A,B,C,D\right\}$ of (\ref{statespace}) the LMI (\ref{KYP-LMI})
has a solution $X\in\XWpdc$  then $\M$ is  \emph{(Lyapunov) stable} (i.e. all eigenvalues are in the closed left half plane with any eigenvalues occurring on the imaginary axis being semisimple), and \emph{passive}, and if there exists a solution $ X\in \XWpdpdc$ then $\M$ is \emph{asymptocially stable}, (i.e. all eigenvalues are the open left half plane) and \emph{strictly passive.} Furthermore, if $\M$ is passive, then
there exist maximal and minimal solutions $X_- \preceq X_+$ of \eqref{KYP-LMI} in $\XWpdc$
such that all solutions $X$ of $W_c(X)\succeq 0$ satisfy
\[
	0 \prec  X_- \preceq X \preceq X_+,
\]
which implies that $\XWpdc$ is bounded.
For more details on the different concepts discussed in this section, see \cite{BeaMV19}.

\subsection{Positive-realness and passivity, discrete-time}
For each of the results of the previous subsection there are discrete-time versions which we briefly recall in this section, see \cite{IonOW99,Pop73}.
Note, that these results can be obtained by applying a bilinear transform (see Appendix \ref{appendix:cayley}) to the continuous-time counterparts.

{
The transfer function $\mathcal T_d(s)$ in \eqref{ABCD} is called {\em positive real} \/if the 
		matrix-valued rational function
		\[ 
			\Phi_d(z):= \mathcal T_d^{\mathsf{H}}(z^{-1}) + \mathcal T_d(z)
		\] 
satisfies	$ \Phi_d(e^{\imath \omega}) = \Phi_d^{\mathsf{H}}(e^{\imath \omega}) \succeq 0 $ for $0 \leq \omega \leq 2\pi$,	 and it is called \emph{strictly positive real} if $\Phi_d(e^{\imath \omega}) \succ 0$ for  $0 \leq \omega \leq 2\pi$.

We consider an associated the matrix function
\begin{equation} \label{prlz}
	W_d(X) = \left[
		\begin{array}{cc}
			X-A^{\mathsf{H}}X\,A & C^{\mathsf{H}}-A^{\mathsf{H}}X\,B \\
			C-B^{\mathsf{H}}X\,A & B^{\mathsf{H}}X\,B +R
		\end{array}
	\right],
\end{equation}
where again $R=D+D^{\mathsf{H}}$,  the sets
\begin{subequations}\label{LMIsolnsetsd}
	\begin{align}
&\XWpdd :=\left\{ X\in \Hn \left|   W_d(X) \succeq 0,\ X \succ 0 \right.\right\}, 
\label{XpdsolnWpsdd} \\[1mm]
&\XWpdpdd :=\left\{ X\in \Hn \left|   W_d(X) \succ  0,\ X \succ 0 \right.\right\}. \label{XpdsolWpdd}
\end{align}
	\end{subequations}
	%
	%
	%
	%
	%
and the system pencil
	\begin{equation} \label{statespaced}
		S_{d}(z) =
		\left[ \begin{array}{cc|c} 0 & A-zI_n & B \\
		zA^{\mathsf{H}}-I_n & 0 & C^{\mathsf{H}} \\ \hline zB^{\mathsf{H}} & C &  R\end{array} \right]
	\end{equation}
	whose Schur complement is $\Phi_d(z)$.
	%
	%

	If the system is positive real}
	%
	%
	then, see \cite{Wil71}, there exists $X\in \Hn$ such that $W_d(X) \succeq 0$.
	%
	If that is the case, a transfer function ${\mathcal T}_d(z):=C(zI_n-A)^{-1}B+D$ is 
	called \emph{passive} and \emph{strictly passive} if even $W_d(X)\succ 0$.
	We again have an associated discrete-time Riccati equation   defined as
	\begin{equation}
		\mathsf{Ricc}_d(X) :=-A^{\mathsf{H}}XA+X-(C^{\mathsf{H}}-A^{\mathsf{H}} X B) (R-B^{\mathsf{H}} X B)^{-1}(C-B^{\mathsf{H}}X A) =0.\label{riccatid}
	\end{equation}
	from which one directly obtains a spectral factorization of $\Phi_d(z)$.
	The solutions of the discrete-time Riccati equation can be obtained by computing a Lagrangian invariant subspace spanned by the columns of $U_d:=\matr{cc} I_n & -X^{\mathsf{T}} \rix^{\mathsf{T}} $ of the \emph{symplectic matrix} %
	\[
		{\mathcal S}_d :=
		\left[\begin{array}{cc} I &  BR^{-1}B^{\mathsf{H}} \\
		0 & A^{\mathsf{H}}-C^{\mathsf{H}}R^{-1}B^{\mathsf{H}} \end{array} \right]^{-1}
		\left[\begin{array}{cc} A-BR^{-1}C & 0 \\
		C^{\mathsf{H}}R^{-1}C & I  \end{array} \right],
	\]
	%
	%
	%
	satisfying ${\mathcal S}_d U_d=U_d A_{F_d}$, where $ A_{F_d} :=  A-BF_d$ with $F_d := (R-B^{\mathsf{H}}XB)^{-1}(C-B^{\mathsf{H}}XA)$.

	Each solution $X$ of~\eqref{riccatid}  can also be associated with an \emph{extended Lagrangian invariant subspace}
	for the pencil $S_{d}(z)$ (see \cite{BenLMV15}), spanned by the columns of
	$ \widehat{U}_d:=\matr{ccc} -X^{\mathsf{T}}
& I_n & -F_d^{\mathsf{T}}  \rix^{\mathsf{T}}$.
In particular, $\widehat{U}_d$ satisfies
\[
	\left[ \begin{array}{ccc}
			0 & A & B \\
			I_n  & 0 & C^{\mathsf{H}} \\
			0 & C & R
	\end{array} \right] \widehat{U}_d
	=\left[ \begin{array}{ccc}
			0 & I_n & 0\\
			A^{\mathsf{H}} & 0 & 0\\
			B^{\mathsf{H}}  & 0 & 0
	\end{array} \right] \widehat{U}_d A_{F_d}.
\]
Again, if the system is passive, then there exist maximal and minimal solutions $X_- \preceq X_+$  in $\XWpdpdd$,
such that all solutions $X$ of $W_d(X)\succeq 0$ satisfy
\[
	0 \prec  X_- \preceq X \preceq X_+,
\]
which implies that $\XWpdd$ is bounded.

\section{The analytic center}\label{sec:ana_center}
If the sets $\XWpdpdc$, $\XWpdpdd$ in \eqref{LMIsolnsets}, respectively \eqref{LMIsolnsetsd}, are non-empty, then we can define their respective analytic center. Following the discussion in \cite{GenNV99}, we first consider the continuous-time case, the discrete-time case is derived in an analogous way. We choose a scalar barrier function
\begin{equation}
	b(X) := -\ln \det W_c(X),
\end{equation}
which is bounded from below but becomes infinitely large when $W_c(X)$ becomes singular.
We define the analytic center of the domain $\XWpdpdc$ as
the minimizer of this barrier function.

\subsection{The continuous-time case}\label{sec:dis-time}
The solutions $X_+$ and $X_-$ of the Riccati equation $\mathsf{Ricc}_{c}(X)=0$ in {\eqref{riccatic}}, are both on the boundary of $\XWpdc$, and hence are not in $\XWpdpdc$.
Since we assume that $\XWpdpdc$ is non-empty, the analytic center is well
defined, see, \eg, Section~4.2 in \cite{Nes13}.

To characterize the analytic center, we first need to find a variation of the \emph{gradient} $b_X$ of the barrier function $b$ at point $X$ along direction $\Delta_X$, which is equal to
\begin{equation}
	\sp{W_c(X)^{-1}}{\Delta W_c(X)[\Delta_X]},
\end{equation}
where $b_X = W_c(X)^{-1}$ and $\Delta W_c(X)[\Delta_X]$ is the incremental step in the direction $\Delta_X$, for details see Appendix~\ref{der_complex_functions}. It
appears that $X$ is an extremal point of the barrier function if and only if
\begin{equation}
	\sp{W_c(X)^{-1}}{\Delta W_c(X)[\Delta_X]} = 0\quad \text{for all }\Delta_X=\Delta_X^{\mathsf{H}}.
\end{equation}
The increment of $W_c(X)$ corresponding to an incremental direction $\Delta_X=\Delta_X^{\mathsf{H}}$ of $X$ is given by
\[
	\Delta W_c(X)[\Delta_X] = -\left[ \begin{array}{cc}
	A^{\mathsf{H}}\Delta_X+\Delta_X A & \Delta_X B \\ B^{\mathsf{H}}\Delta_X & 0 \end{array} \right].
\]
The equation for the extremal point then becomes
\begin{equation}\label{orth}
	\sp{W_c(X)^{-1}}{\left[ \begin{array}{cc}
	A^{\mathsf{H}}\Delta_X + \Delta_X A & \Delta_X B \\ B^{\mathsf{H}}\Delta_X & 0 \end{array} \right]}
	=0\quad \text{for all }\Delta_X=\Delta_X^{\mathsf{H}}.
\end{equation}
Defining
\[
	F_c := R^{-1}(C-B^{\mathsf{H}}X), \quad P_c := -A^{\mathsf{H}}X-XA-F_c^{\mathsf{H}}RF_c,
\]
then
\[ 
	W_c(X)= \left[ \begin{array}{cc}
	I & F_c^{\mathsf{H}} \\ 0 & I \end{array} \right]
	\left[ \begin{array}{cc}
	P_c & 0 \\ 0 & R \end{array} \right]
	\left[ \begin{array}{cc}
			I & 0 \\ F_c
			  & I \end{array} \right].
		\]
		For a point $X\in\XWpdpdc$ it is obvious that
		we also have $P_c=\mathsf{Ricc}_c(X)\succ  0$, and hence (\ref{orth}) is equivalent to
		\[ 
			\sp{\left[ \begin{array}{cc}
				P_c^{-1} & 0 \\ 0 & R^{-1} \end{array} \right]
				}{\left[ \begin{array}{cc}
				I & -F_c^{\mathsf{H}} \\ 0 & I \end{array} \right]
				\left[ \begin{array}{cc}
				A^{\mathsf{H}}\Delta_X+\Delta_X A & \Delta_X B \\ B^{\mathsf{H}}\Delta_X & 0 \end{array} \right]
				\left[ \begin{array}{cc}
				I & 0 \\ -F_c & I \end{array} \right]
			} = 0,
			\]
			or 
			\[ 
				\sp{ P_c^{-1} }{ A^{\mathsf{H}}\Delta_X+\Delta_X A-F_c^{\mathsf{H}}B^{\mathsf{H}}\Delta_X-\Delta_X BF_c } =0,
			\] 
			and this is equivalent to
			\begin{equation} \label{skew}
				P_c^{-1}A_{F_c}^{\mathsf{H}} +A_{F_c} P_c^{-1} =0,
			\end{equation}
			where we have set $A_{F_c} = A-BF_c$.

			We emphasize that $P_c$ is nothing but the Riccati operator
			$\mathsf{Ricc}_c(X)$ defined in \eqref{riccatic}, and that $A_{F_c}$ is the corresponding
			closed loop matrix. For the classical Riccati solutions we have
			$P_c=\mathsf{Ricc}_c(X)=0$ and the corresponding closed loop matrix
			is well-known to have its eigenvalues equal to a subset of the
			eigenvalues of the corresponding Hamiltonian matrix \eqref{HamMatrix}.

			Since $P_c=\mathsf{Ricc}_c(X)\succ  0$, it follows that $P_c$ has a
			Hermitian square root $T_c$ satisfying $P_c=T_c^2$. Transforming
			(\ref{skew}) with the invertible matrix $T_c$,
			we obtain
			\[
				T_c^{-1}A_{F_c}^{\mathsf{H}}T_c + T_cA_{F_c}T_c^{-1}=0.
			\]
			Hence $\hat A_{F_c}:=T_cA_{F_c}T_c^{-1}$ is skew-Hermitian and has all its eigenvalues on
			the imaginary axis, and so does $A_{F_c}$.
			Therefore, the closed loop matrix $A_{F_c}$ of the analytic center has
			a spectrum that is also central.

			It is important to also note that
			\[
				\det W_c(X) = \det \mathsf{Ricc}_c(X)  \det R,
			\]
			which implies that we are also finding a stationary point of $\det \mathsf{Ricc}_c(X)$, since $\det R$ is constant and non-zero.

			Since the matrix $P_c$ is positive definite and invertible, we can rewrite
			the equations defining the analytic center as
			\begin{subequations}
				\begin{align} \label{Fc}
					RF_c  &= C-B^{\mathsf{H}}X,\\ \label{Xc}
					P_c   &= -A^{\mathsf{H}}X-X A-F_c^{\mathsf{H}}RF_c, \\ \label{Pc}
					0 &= P_c(A-BF_c)+(A^{\mathsf{H}}-F_c^{\mathsf{H}}B^{\mathsf{H}})P_c,
				\end{align}
			\end{subequations}
			where $X=X^{\mathsf{H}}$ and $P_c=P_c^{\mathsf{H}}\succ  0$.
			We can compute the analytic center by solving these three equations which actually form a cubic equation in $X$.

			Note that even though the eigenvalues of the closed loop matrix $F_c$ associated with the analytic center are all purely imaginary, the eigenvalues of the original system and the poles of the transfer function stay invariant under the state space transformation $\mathcal T_c$.

			\subsection{The discrete-time case}\label{sec:discase}
			For discrete-time systems, the increment of $W_d(X)$ equals
			\begin{equation}
				\Delta W_d(X)[\Delta_X] =  - \left[ \begin{array}{cc}
				A^{\mathsf{H}}\Delta_XA-\Delta_X & A^{\mathsf{H}}\Delta_XB \\ B^{\mathsf{H}}\Delta_XA & B^{\mathsf{H}}\Delta_XB  \end{array} \right],
			\end{equation}
			for all $\Delta_X=\Delta_X^{\mathsf{H}}$.
			Defining $F_d := (R-B^{\mathsf{H}}XB)^{-1}(C - B^{\mathsf{H}}XA)$, $P_d := - A^{\mathsf{H}}XA + X-F_d^{\mathsf{H}}(R-B^{\mathsf{H}}XB)F_d$, and $A_{F_d} := A-BF_d$, then $W_d(X)$ factorizes as
			\begin{equation}
				W_d(X)= \left[ \begin{array}{cc}
				I & F_d^{\mathsf{H}} \\ 0 & I \end{array} \right]
				\left[ \begin{array}{cc}
				P_d & 0 \\ 0 & R-B^{\mathsf{H}}XB \end{array} \right]
				\left[ \begin{array}{cc}
						I & 0 \\ F_d
						  & I \end{array} \right],
					\end{equation}
					and the equation for the extremal point becomes
					\begin{align}
& \Bigg\langle \left[ \begin{array}{cc}
	P_d^{-1} & 0 \\ 0 & (R-B^{\mathsf{H}}XB)^{-1} \end{array} \right]
	, \\
		 & \qquad \left[ \begin{array}{cc}
		 I & -F_d^{\mathsf{H}} \\ 0 & I \end{array} \right]
		 \left[ \begin{array}{cc}
		 A^{\mathsf{H}}\Delta_X A-\Delta_X & A^{\mathsf{H}}\Delta_X B \\ B^{\mathsf{H}}\Delta_X A & B^{\mathsf{H}}\Delta_X B \end{array} \right]
		 \left[ \begin{array}{cc}
		 I & 0 \\ -F_d & I \end{array} \right]
	 \Bigg \rangle =0,
	 \end{align}
	 or
	 \begin{equation}
		 \sp{P_d^{-1} }{ A_{F_d}^{\mathsf{H}}\Delta_X A_{F_d}-\Delta_X } + \sp{(R-B^{\mathsf{H}}XB)^{-1}}{B^{\mathsf{H}}\Delta_X B} =0.
	 \end{equation}
	 This is equivalent to
	 \begin{equation} \label{stein}
		 A_{F_d}P_d^{-1}A_{F_d}^{\mathsf{H}} -P_d^{-1}+B(R-B^{\mathsf{H}}XB)^{-1}B^{\mathsf{H}} =0,
	 \end{equation}
	 which is a non-homogenous discrete-time Lyapunov equation. Since $(A,B)$
	 is controllable (by assumption), so is $(A_{F_c},B)$ and it follows
	 then from (\ref{stein}) that the eigenvalues of $A_{F_d}$ are now
	 strictly inside the unit circle. This is \emph{clearly different from the
	 continuous-time case}, where the spectrum of $A_{F_c}$ was on the boundary
	 of the stability region. The equations defining the discrete-time analytic center then become
	 \begin{subequations}
		 \begin{align} \label{Fd}
			 (R-B^{\mathsf{H}}X B)F_d  &= C-B^{\mathsf{H}}X A,\\ \label{Xd}
			 P_d   &= C^{\mathsf{H}}R^{-1}C+X-A^{\mathsf{H}}X A\nonumber \\
			       & \qquad-F_d^{\mathsf{H}}(R-B^{\mathsf{H}}X B)F_d, \\ \nonumber
			 0&= (A-BF_d)P_d^{-1}(A^{\mathsf{H}}-F_d^{\mathsf{H}}B^{\mathsf{H}})\\
			  &  \qquad -P_d^{-1}+
			  B(R-B^{\mathsf{H}}X B)^{-1}B^{\mathsf{H}}.\label{Pd}
		 \end{align}
	 \end{subequations}

	 \begin{remark}\label{rem:inside}{\rm
			 Note that we could have transformed the solution of the corresponding
			 continuous-time problem via a bilinear transform, which would then yield
			 a feedback $F_d$ that puts all eigenvalues on the unit circle, but the feedback
			 would of course be different. For a more detailed discussion, see
			 Appendix~\ref{appendix:cayley}.
		 }
	 \end{remark}
	 \section{Numerical computation of the analytic center}\label{sec:numerical}
	 In this section we present methods for the numerical computation of the analytic center.

	 Suppose that we are at a point $X_0\in \XWpdpdc\ (\XWpdpdd)$ and want to perform the next step using an increment $\Delta_X$.
	 We discuss a steepest ascent and a Newton-like method to obtain that increment.

	 \subsection{A steepest ascent method}\label{sec:ascent}
	 In order to formulate an optimization scheme to compute the analytic center, we can use the gradient of the barrier function $b(X)$ with respect to $X$ to obtain a steepest ascent method.

	 In the continuous-time case, we then need to take a step $\Delta_X$ for which
	 $\sp{b_W(X_{0})}{  \Delta W_c(X_{0})[\Delta_X]}$ is maximal,
	 which is equivalent to
	 \[
		 \Delta_X := \argmax_{\sp{\Delta_X}{\Delta_X}=1} \sp{
		 P_c^{-1}(X_{0})A_{F_c}(X_{0})^{\mathsf{H}}+A_{F_c}(X_{0})P_c^{-1}(X_{0})}{ \Delta_X }.
	 \]
	 The maximum is obtained by choosing $\Delta_X$ proportional to the gradient %
	 \[
		 P_c^{-1}(X_{0})A_{F_c}(X_{0})^{\mathsf{H}}+A_{F_c}(X_{0})P_c^{-1}(X_{0}).
	 \]
	 The corresponding optimal stepsize $\alpha$ for the increment  $\Delta_X$ can be obtained from the determinant of the incremented LMI $W_c(X_{0} + \alpha \Delta_{X})\succ 0$. 

	 In  the discrete-time case, we obtain the increment from
		 \begin{align}& \Delta_X := \\  &\quad \argmax_{\sp{\Delta_X}{\Delta_X}=1}
			 \sp{ A_{F_d}(X_{0}) P_d^{-1}(X_{0})A_{F_d}^{\mathsf{H}}(X_{0})-P_d^{-1}(X_{0})  +  B(R-B^{\mathsf{H}}X_{0}B)^{-1}B^{\mathsf{H}}}{ \Delta_X }.
		 \end{align}
		 The maximum is obtained by choosing $\Delta_X$ proportional to
		 \[
			 A_{F_d}(X_{0}) P_d^{-1}(X_{0})A_{F_d}^{\mathsf{H}}(X_{0})-P_d^{-1}(X_{0})  + B(R-B^{\mathsf{H}}X_{0}B)^{-1}B^{\mathsf{H}},
		 \]
		 and the stepsize $\alpha$ for the increment $\Delta_X$ can again be obtained from the determinant of the incremented LMI $W_{d}(X_{0} + \alpha \Delta_{X})\succ 0$. 

		 \begin{remark}\label{remstep}{\rm
				 The detailed explanation how to compute the stepsize $\alpha$ will be done later as a special case of the derivation of the Newton step, see subsection~\ref{sec:newton}.
				 The idea is to find the second order Taylor expansion of the function
				 $f(X_{0}+\alpha \Delta_X)=-\ln \det G(X_{0}+\alpha \Delta_X)$ and then to maximize this
				 quadratic function in the scalar $\alpha$.
			 }
		 \end{remark}

		 \subsection{A Newton-like method}\label{sec:newton}
		 For the computation of a Newton-like increment $\Delta_X$ we also need the Hessian of the barrier function~$b$. In order to simplify the derivation we first equivalently reformulate the barrier function into a more suitable form.

		 \subsubsection{The continuous-time case}

		 In   the continuous-time case, we have that
		 \[
			 W_c (X_0+\Delta_X) = \left[
				 \begin{array}{cc}
					 Q_0 & C_0^{\mathsf{H}}\\
					 C_0 & R_0
				 \end{array}
			 \right]
			 -  \left[\begin{array}{c}
					 \Delta_X\\ 0
				 \end{array}
				 \right]  \left[\begin{array}{cc}
					 A & B
				 \end{array}
			 \right] -
			 \left[\begin{array}{c}
					 A^{\mathsf{H}} \\ B^{\mathsf{H}}
				 \end{array}
				 \right]  \left[\begin{array}{cc}
					 \Delta_X & 0
				 \end{array}
			 \right],
		 \]
		 where
		 \[\left[
				 \begin{array}{cc}
					 Q_0 & C_0^{\mathsf{H}}\\
					 C_0 & R_0
				 \end{array}
			 \right] := W_c (X_0).
		 \]
		 Up to the constant $(-1)^n$, the determinant of $W_c (X_0+\Delta_X) $  is equal to
		 \begin{equation}
			 \label{transfo}
			 \det \left[\begin{array}{cc|cc}
					 0 & I_n & \Delta_X & 0 \\
					 I_n & 0 & A & B \\ \hline
					 \Delta_X & A^{\mathsf{H}} & Q_0 & C_0^{\mathsf{H}} \\
					 0 & B^{\mathsf{H}} &C_0 & R_0
			 \end{array}\right]
			 = \det \left[\begin{array}{cc|cc}
					 0 & I_n & \Delta_X & 0 \\
					 I_n & 0 & A_{F_c} & B \\ \hline
					 \Delta_X & A_{F_c}^{\mathsf{H}} & P_0 & 0 \\
					 0 & B^{\mathsf{H}} & 0 & R_0
			 \end{array}\right],
		 \end{equation}
		 where $A_{F_c}:=A-BR_0^{-1}C_0$ and $P_0:=Q_0-C_0^{\mathsf{H}}R_0^{-1}C_0$ are associated with the current point $X_0$. Carrying out an additional congruence transformation with
		 \[
			 Z_c := \left[\begin{array}{cccc} P_0^{-\frac12} & 0 & 0 & 0  \\ 0 & P_0^\frac12 & 0 & -\hat BR_0^{-1} \\ 0 & 0 & P_0^{-\frac12}& 0 \\ 0 & 0 & 0 & I_m \end{array}\right],
		 \]
		 we obtain
		 \[
			 \left[\begin{array}{cccc}
					 0 & I_n & \hat \Delta_X & 0 \\
					 I_n & -\hat BR_0^{-1}\hat B^{\mathsf{H}} & \hat A_{F_c} & 0 \\
					 \hat \Delta_X & \hat A_{F_c}^{\mathsf{H}} & I_n & 0 \\
					 0 & 0 & 0 & R_0
			 \end{array}\right] :=
			 Z_c \left[\begin{array}{cccc}
					 0 & I_n & \Delta_X & 0 \\
					 I_n & 0 & A_{F_c} & B \\
					 \Delta_X & A_{F_c}^{\mathsf{H}} & P_0 & 0 \\
					 0 & B^{\mathsf{H}} & 0 & R_0
			 \end{array}\right] Z_c^{\mathsf{H}},
		 \]
		 where $\hat B = P_0^\frac12 B$, $\hat A_{F_c}:=P_0^\frac12A_FP_0^{-\frac12}$, and $\hat \Delta_X=P_0^{-\frac12}\Delta_XP_0^{-\frac12}$.
		 It is clear that the determinant of the congruence transformation introduces a factor $\det (P_0)$. Finally, the determinant of the transformed matrix is, up to a constant $\det (R_0)$, equal to
		 \begin{multline}  \det \left[\begin{array}{cc}
				 -\hat \Delta_X & I_n \end{array}\right] \left[\begin{array}{cc}
				 -\hat BR_0^{-1}\hat B^{\mathsf{H}} & \hat A_{F_c} \\
				 \hat A_{F_c}^{\mathsf{H}} & I_n
				 \end{array}\right]  \left[\begin{array}{cc}
			 -\hat \Delta_X \\  I_n \end{array}\right]\\  = \det \left[I_n  - \hat \Delta_X \hat A_{F_c} -   \hat A_{F_c}^{\mathsf{H}} \hat \Delta_X   -  \hat \Delta_X\hat BR_0^{-1}\hat B^{\mathsf{H}} \hat \Delta_X
		 \right].
			 \end{multline}
			 This is the multiplying factor of the current value of $\det W_c(X_0)$ and we can make it larger than $1$ if $\hat A_{F_c}$ is not skew-Hermitian yet. Introduce
			 \begin{align}
				 f(X)&:= - \ln \det ( G(X)),\\
				 Q_c&:=\hat BR_0^{-1}\hat B^{\mathsf{H}},\\
				 G(X)&:= I_n  - X \hat A_{F_c} -   \hat A_{F_c}^{\mathsf{H}} X   -  XQ_cX.
			 \end{align}
			 In the set of Hermitian matrices (over the reals), the gradient of $f(X)$ then  is given by
			 \[
				 f_X(X)[\Delta] = \langle- G(X)^{-1},-(  \Delta \hat A_{F_c} +   \hat A_{F_c}^{\mathsf{H}} \Delta   +  \Delta Q_c X +  X Q_c \Delta )\rangle
			 \]
			 and the Hessian is given by
			 \begin{align}
			 f_{XX}(X)[\Delta,\Delta] &= \left < -G(X)^{-1}(\Delta \hat A_{F_c} +   \hat A_{F_c}^{\mathsf{H}} \Delta   +  \Delta  Q_c X +  X Q_c\Delta)G(X)^{-1},\right .\\
						  & \qquad \left . -(\Delta \hat A_{F_c} +   \hat A_{F_c}^{\mathsf{H}} \Delta   +  \Delta  Q_c X +  X Q_c\Delta)\right >  \\ &\qquad+ \langle -G(X)^{-1}, -2\Delta Q_c \Delta  \rangle.
		 \end{align}
		 A second order approximation of $f$ (at $X=0$) is given by
		 \begin{align}
			 f(\Delta) \approx T^{(2)}_f(\Delta) &= f(0) +f_X(0)[\Delta] + \frac12f_{XX}(0)[\Delta, \Delta]\\
							     & = \langle I_n, \Delta \hat A_{F_c} +   \hat A_{F_c}^{\mathsf{H}} \Delta   \rangle + \frac12
							     \langle \Delta \hat A_{F_c} +   \hat A_{F_c}^{\mathsf{H}} \Delta  , \Delta \hat A_{F_c} +   \hat A_{F_c}^{\mathsf{H}} \Delta \rangle \\
							     &\qquad +  \langle I_n, \Delta  Q_c \Delta \rangle,
		 \end{align}
		 and we want the gradient of $f$ to be $0$. For the Newton step we want to determine $\Delta=\Delta^{\mathsf{H}}$ such that $\frac{\partial T^{(2)}_f}{\partial \Delta}(\Delta)=0$,
		 \ie we require that
		 \begin{equation}
			 \langle I_n,  Y \hat A_{F_c}  + \hat A_{F_c}^{\mathsf{H}} Y\rangle
			 +\langle  \Delta \hat A_{F_c} +   \hat A_{F_c}^{\mathsf{H}} \Delta, Y \hat A_{F_c} +   \hat A_{F_c}^{\mathsf{H}} Y\rangle
			 + 2\langle I_n, Y Q_c \Delta\rangle = 0
		 \end{equation}
		 for all $Y=Y^{\mathsf{H}}$.
		 Using the properties of the scalar product, we obtain that this is equivalent to
		 \[
			 \langle Y,
			 \hat A_{F_c}^{\mathsf{H}} + \hat A_{F_c} +\hat A_{F_c}\Delta \hat A_{F_c}  + \hat A_{F_c} \hat A_{F_c}^{\mathsf{H}} \Delta + \hat A_{F_c}^{\mathsf{H}}\Delta \hat A_{F_c}^{\mathsf{H}} +  \Delta\hat A_{F_c} \hat A_{F_c}^{\mathsf{H}} + Q_c \Delta + \Delta Q_c \rangle =0
		 \]
		 for all $Y= Y^{\mathsf{H}}$, or equivalently
		 \[
			 \hat A_{F_c}\Delta \hat A_{F_c}  + \hat A_{F_c} \hat A_{F_c}^{\mathsf{H}} \Delta + \hat A_{F_c}^{\mathsf{H}}\Delta \hat A_{F_c}^{\mathsf{H}} +  \Delta\hat A_{F_c} \hat A_{F_c}^{\mathsf{H}} + Q_c \Delta + \Delta Q_c =  -\hat A_{F_c}^{\mathsf{H}}-\hat A_{F_c}.
		 \]
		 If we fix a direction $\Delta$ and look for $\alpha$ such that $f(\alpha\Delta)$ is maximal, then the Newton step can be computed in an analogous way. With $g(\alpha)= f(\alpha\Delta)$, we then have
		 \[
			 g(\alpha)\approx f(0) + \alpha f_{X}(0)[\Delta] + \frac12 \alpha^2f_{XX}(0)[\Delta, \Delta]
		 \]
		 and thus the Newton correction in $\alpha$ is given by
		 \begin{equation}\label{eq:newtonct}
			 \delta_\alpha = - \frac{\langle I_n, \Delta \hat A_{F_c} + \hat A_{F_c}^{\mathsf{H}} \Delta \rangle}{\langle I_n, \Delta Q_c \Delta \rangle + \frac12 \|\Delta \hat A_{F_c} + \hat A_{F_c}^{\mathsf{H}} \Delta \|^2}.
		 \end{equation}
		 \subsubsection{The discrete-time case}
		 For  the discrete-time case, we have that
		 \[
			 W_d (X_0+\Delta_X) = \left[
				 \begin{array}{cc}
					 Q_0 & C_0^{\mathsf{H}}\\
					 C_0 & R_0
				 \end{array}
			 \right]
			 -  \left[\begin{array}{c}
					 A^{\mathsf{H}} \\ B^{\mathsf{H}}
				 \end{array}
				 \right]  \Delta_X \left[\begin{array}{cc}
					 A & B
				 \end{array}
			 \right] +
			 \left[\begin{array}{c}
					 I_n \\ 0
				 \end{array}
				 \right]  \Delta_X \left[\begin{array}{cc}
					 I_n & 0
				 \end{array}
			 \right],
		 \]
		 where
		 \[ \left[
				 \begin{array}{cc}
					 Q_0 & C_0^{\mathsf{H}}\\
					 C_0 & R_0
				 \end{array}
			 \right] := W_d (X_0).
		 \]
		 The determinant of $W_d (X_0+\Delta_X)$ is, up to the constant $(-1)^n$, equal to
		 \begin{equation}
			 \label{transfod}
			 \det \left[\begin{array}{cc|cc}
					 -I_n & 0 & \Delta_X & 0 \\
					 0 & I_n & A & B \\ \hline
					 I_n & A^{\mathsf{H}} \Delta_X & Q_0 & C_0^{\mathsf{H}} \\
					 0 & B^{\mathsf{H}} \Delta_X  & C_0 & R_0
			 \end{array}\right]
			 = \det \left[\begin{array}{cc|cc}
					 -I_n & 0 & \Delta_X & 0 \\
					 0 & I_n & A_{F_d} & B \\ \hline
					 I_n & A_{F_d}^{\mathsf{H}}\Delta_X & P_0 & 0 \\
					 0 & B^{\mathsf{H}}\Delta_X & 0 & R_0
			 \end{array}\right]
		 \end{equation}
		 where $R_0=R-B^{\mathsf{H}}X_0B$, $C_0=C-B^{\mathsf{H}}X_0A$, $Q_0=-A^{\mathsf{H}}X_0A+X_0$, $A_{F_d}:=A-BR_0^{-1}C_0^{\mathsf{H}}$ and $P_0:=Q_0-C_0R_0^{-1}C_0^{\mathsf{H}}$ are associated with the current point $X_0$. Setting
		 \[
			 Z_\ell := \left[\begin{array}{cccc} P_0^{-\frac12} & 0 & 0 & 0  \\ 0 &  P_0^{\frac12} & 0 & -\hat BR_0^{-1} \\ 0 & 0 & P_0^{-\frac12}& 0 \\ 0 & 0 & 0 & I_m \end{array}\right], \   Z_r := \left[\begin{array}{cccc} P_0^{\frac12} & 0 & 0 & 0  \\ 0 &  P_0^{-\frac12} & 0 & 0 \\ 0 & 0 & P_0^{-\frac12}& 0 \\ 0 & -R_0^{-1}\hat B^{\mathsf{H}}\hat \Delta_X & 0 & I_m \end{array}\right],
		 \]
		 transforming with $Z_\ell$ from the left and $Z_r$ from the right, and substituting $\hat B = P_0^\frac12 B$, $\hat A_{F_d}:=P_0^\frac12A_{F_d}P_0^{-\frac12}$, and $\hat \Delta_X=P_0^{-\frac12}\Delta_XP_0^{-\frac12}$, we obtain the matrix
		 \[
			 \left[\begin{array}{cccc}
					 -I_n & 0 & \hat \Delta_X & 0 \\
					 0 & I_n -\hat BR_0^{-1}\hat B^{\mathsf{H}}\hat \Delta_X & \hat A_{F_d} & 0 \\
					 I_n & \hat A_{F_d}^{\mathsf{H}}\hat \Delta_X & I_n & 0 \\
					 0 & 0 & 0 & R_0
			 \end{array}\right] :=
			 Z_\ell \left[\begin{array}{cccc}
					 -I_n & 0 & \Delta_X & 0 \\
					 0 & I_n & A_{F_d} & B \\
					 I_n & A_{F_d}^{\mathsf{H}}\Delta_X  & P_0 & 0 \\
					 0 & B^{\mathsf{H}}\Delta_X  & 0 & R_0
			 \end{array}\right] Z_r.
		 \]
		 The determinant of the transformed matrix is equal to
		 \[
			 \det \left[\begin{array}{cc}
					 I_n -\hat BR_0^{-1}\hat B^{\mathsf{H}}\hat \Delta_X & \hat A_{F_d} \\
					 \hat A_{F_d}^{\mathsf{H}}\hat \Delta_X & I_n +\hat \Delta_X \\
			 \end{array}\right] \cdot \det R_0.
		 \]
		 We introduce
		 \begin{align}
			 f(X)&:= - \ln \Re \det ( G(X)), \\ G(X)&:= \left[\begin{array}{cc}
					 I_n -Q_d X & \hat A_{F_d} \\
					 \hat A_{F_d}^{\mathsf{H}} X & I_n +X \\
			 \end{array}\right], \\ Q_d&:=\hat BR_0^{-1}\hat B^{\mathsf{H}},
				 \end{align}
				 and compute the gradient and the Hessian of $f(X)$. The computation of the gradient is not as straight-forward  as in the continuous-time case, since we consider non-Hermitian matrices. It is given by
				 \begin{equation}
					 f_{X}(X)[\Delta] =\sp{-\frac{\overline{\det G(X)}}{\Re\det G(X)}G(X)^{-\mathsf{H}}}{ \left[\begin{array}{cc}
								 -Q_d \Delta & 0 \\
								 \hat A_{F_d}^{\mathsf{H}}\Delta & \Delta \\
					 \end{array}\right]},
				 \end{equation}
				 see Appendix~\ref{der_complex_functions} for more details.
				 Revisiting the steps for the derivation of $G(X)$, we notice that $\det (G(X))$ is still real and the solution $\Delta$ is still unique and Hermitian.
				 Thus, the Hessian is given by
				 \begin{equation}
					 f_{XX}(X)[\Delta,\Delta] =\sp{G(X)^{-\mathsf{H}}
						 \left[\begin{array}{cc}
								 -Q_d \Delta & 0 \\
								 \hat A_{F_d}^{\mathsf{H}}\Delta & \Delta \\
						 \end{array}\right]^{\mathsf{H}}
						 G(X)^{-\mathsf{H}}}{\left[\begin{array}{cc}
								 -Q_d \Delta & 0 \\
								 \hat A_{F_d}^{\mathsf{H}}\Delta & \Delta \\
					 \end{array}\right]},
				 \end{equation}
				 and a second order approximation of $f$ (at $X=0$) is given by
				 \begin{align}
					 f(\Delta) &\approx T^{(2)}_f(\Delta)\\ &= f(0) +f_X(0)[\Delta] + \frac12f_{XX}(0)[\Delta, \Delta]\\
						   &=
						   -\sp{
							   \begin{bmatrix}
								   I_n & 0\\
								   -\hat A_{F_d}^{\mathsf{H}} 	& I_n
							   \end{bmatrix}
							   }{
							   \left[\begin{array}{cc}
									   -Q_d Y & 0 \\
									   \hat A_{F_d}^{\mathsf{H}}Y & Y \\
							   \end{array}\right]
						   }\\
						   &+\frac12
						   \sp{
							   \begin{bmatrix}
								   I_n & 0\\
								   -\hat A_{F_d}^{\mathsf{H}} 	& I_n
							   \end{bmatrix}
							   \left[\begin{array}{cc}
									   -\Delta Q_d & \Delta \hat A_{F_d} \\
									   0 & \Delta \\
							   \end{array}\right]
							   \begin{bmatrix}
								   I_n & 0\\
								   -\hat A_{F_d}^{\mathsf{H}} 	& I_n
							   \end{bmatrix}}{
							   \left[\begin{array}{cc}
									   -Q_d \Delta & 0 \\
									   \hat A_{F_d}^{\mathsf{H}}\Delta & \Delta \\
							   \end{array}\right]
						   } \\
						   &=
						   -\sp{ I_n - Q_d -\hat A_{F_d} \hat{A}_F^{\mathsf{H}}}{ \Delta}\\
						   & +\frac12
						   \sp{
							   Q_d\Delta Q_d\Delta - 2\hat A_{F_d} \hat A_{F_d}^{\mathsf{H}} \Delta Q_d \Delta - \hat A_{F_d} \hat A_{F_d}^{\mathsf{H}} \Delta \hat A_{F_d} \hat A_{F_d}^{\mathsf{H}} \Delta
						   + 2\hat A_{F_d}  \Delta \hat A_{F_d}^{\mathsf{H}}  \Delta - \Delta^2}
						   {I_n}.
				 \end{align}
				 We want the gradient of $f$ to be $0$, so for the Newton step we determine $\Delta=\Delta^{\mathsf{H}}$ such that $\frac{\partial T^{(2)}_f}{\partial \Delta}(\Delta)=0$,
				 or equivalently
				 \begin{align}
& 0=\langle I_n - Q_d -\hat A_{F_d} \hat A_{F_d}^{\mathsf{H}}, Y\rangle + \langle Q_d\Delta Q_d + \hat A_{F_d} \hat A_{F_d}^{\mathsf{H}} \Delta Q_d, Y  \rangle\\
& \qquad+
\langle
Q_d \Delta\hat A_{F_d} \hat A_{F_d}^{\mathsf{H}}  + \hat A_{F_d} \hat A_{F_d}^{\mathsf{H}} \Delta \hat A_{F_d} \hat A_{F_d}^{\mathsf{H}}   - \hat A_{F_d}  \Delta \hat A_{F_d}^{\mathsf{H}}- \hat A_{F_d}^{\mathsf{H}}  \Delta \hat A_{F_d}  + \Delta,
Y\rangle
				 \end{align}
				 for all $Y=Y^{\mathsf{H}}$.
				 Using the properties of the scalar product, we obtain that
				 \begin{multline}\label{eq:newtondiscrete}
					 I_n - Q_d -\hat A_{F_d} \hat A_{F_d}^{\mathsf{H}} \\  =
					 Q_d\Delta Q_d + \hat A_{F_d} \hat A_{F_d}^{\mathsf{H}} \Delta Q_d  +Q_d \Delta\hat A_{F_d} \hat A_{F_d}^{\mathsf{H}}  + \hat A_{F_d} \hat A_{F_d}^{\mathsf{H}} \Delta \hat A_{F_d} \hat A_{F_d}^{\mathsf{H}} \\
					 - \hat A_{F_d}  \Delta \hat A_{F_d}^{\mathsf{H}}  - \hat A_{F_d}^{\mathsf{H}}  \Delta \hat A_{F_d}  + \Delta.
				 \end{multline}

				 If we fix a direction $\Delta$ and look for $\alpha$ such that $f(\alpha\Delta)$ is maximal,
				 then the Newton correction in $\alpha$ is given by
				 \begin{equation}\label{eq:newtondt}
					 \delta_\alpha = \frac{2\sp{I_n - Q_d -\hat A_{F_d} \hat{A}_F^{\mathsf{H}}}{\Delta}}{
						 \sp{
							 Q_d\Delta Q_d\Delta - 2\hat A_{F_d} \hat A_{F_d}^{\mathsf{H}} \Delta Q_d \Delta - \hat A_{F_d} \hat A_{F_d}^{\mathsf{H}} \Delta \hat A_{F_d} \hat A_{F_d}^{\mathsf{H}} \Delta
						 + 2\hat A_{F_d}  \Delta \hat A_{F_d}^{\mathsf{H}}  \Delta - \Delta^2}
					 {I_n}}.
				 \end{equation}
				 \begin{remark}{\rm
						 To  carry out the Newton step,  we have to solve equation \eqref{eq:newtonct} in the continuous-time case or \eqref{eq:newtondiscrete} in the discrete-time case. This can be done via Kronecker products (for the cost of increasing the system dimension to $n^2$), \ie via
						 \begin{align}
& \left((I_n \otimes \hat{A}_{F_c} + \overline{\hat{A}_{F_c}} \otimes I_n)
	(\hat{A}_{F_c}^T
	\otimes I_n + I_n \otimes\hat{A}_{F_c}^{\mathsf{H}}) + I_n \otimes Q_c + \overline{Q}_c
\otimes I_n\right) \vect X\\
& \quad = \vect( \hat{A}_{F_c} + \hat{A}_{F_c}^{\mathsf{H}})
						 \end{align}
						 in the continuous-time case, or
						 \begin{align}
	&
	\left(
		(\overline{\hat{A}_{F_d}} \otimes \hat{A}_{F_d} - I_n \otimes I_n)({\hat{A}_{F_d}^T}
		\otimes \hat{A}_{F_d}^{\mathsf{H}} - I_n \otimes I_n) + \overline Q_d \otimes
	\hat{A}_{F_c}\hat{A}_{F_c}^{\mathsf{H}} \right . \\
	& \qquad \left .+ \overline{\hat{A}_{F_c}}\hat{A}_{F_c}^T \otimes Q_d
	+ \overline{Q_d}\otimes Q_d \right) \vect X	  = \vect(I_n -Q_d - \hat{A}_{F_d} \hat{A}_{F_d}^{\mathsf{H}})
						 \end{align}
						 in the discrete-time case.
					 }
				 \end{remark}
				 \subsubsection{Convergence}
				 In this subsection, we show that the functions that we consider here actually have a globally
				 converging Newton method. For this we have to analyze some more
				 properties of our functions and refer to \cite{BoyV2004,Nes13} for more details.
				 %
				 Recall that a smooth function $f: \R^n\rightarrow \R$ is
				 \emph{self-concordant} if it is a closed and convex function with open domain and
				 \begin{equation}
					 |f^{(3)}(x)| \le 2 f^{(2)}(x)^{\frac32}
				 \end{equation}
				 in the case $n=1$, and if $n>1$, then $f$ is \emph{self-concordant} if it is self-concordant along
				 every direction in its domain. In particular, if $n=1$ then $f(x) = - \ln(x)$ is self-concordant and in general, if $f$ is self-concordant and in addition $A\in\mat nm$,
				 $b\in\R^n$, then $f(Ax + b)$ is also self-concordant.
				 These results can be easily extended to the real space of complex matrices showing that the function $b(X)= - \ln\det(W(X))$ is self-concordant.
				 Let
				 \begin{equation}
					 \lambda(X):= \sp{\left(b_{XX}\right)^{-1}b_X}{b_X},
				 \end{equation}
				 where  $\left(b_{XX}\right)^{-1}b_X = \Delta$ in the Newton step, \ie {$\lambda(X) = \sp{\Delta}{A_{F_c}P_c^{-1} + P_c^{-1} A_{F_c}^{\mathsf{H}}}$ in the continuous-time case, or  $\lambda(X) = \sp{\Delta}{A_{F_d}P_d^{-1}A_{F_d}^{\mathsf{H}} -P_d^{-1} + B(R-B^{\mathsf{H}}X B)^{-1}B^{\mathsf{H}}}$ in the discrete-time case} respectively. In both cases $\lambda(X)$ can be easily computed during the Newton step and gives an estimate of the residual of the current approximation of the solution.

				 Furthermore, for every $X\in\XWpdpdc(\XWpdpdd)$ the quadratic form of the Hessian in the original coordinates can be expressed as
				 \begin{equation}
					 \sp{b_{XX}\Delta_X}{\Delta_X} = \tr\left({W^{-\frac 12} \Delta W[\Delta_X] W^{-1}  \Delta W[\Delta_X]W^{-\frac 12}} \right).
				 \end{equation}
				 Using the Courant-Fischer theorem twice, see e.\,g.\ \cite{Bel97}, this implies that
				 \begin{align}
					 \tr\left({W^{-\frac 12} \Delta W[\Delta_X] W^{-1}  \Delta W[\Delta_X]W^{-\frac 12}}\right)
	& \ge \frac{1}{\lambda_{\mathrm{max}}(W(X))} \tr \left(\Delta W[\Delta_X] W^{-1}  \Delta W[\Delta_X]\right)\\
	&\ge \frac{1}{\lambda^2_{\mathrm{max}}(W(X))} \tr \left(\Delta W[\Delta_X]  \Delta W[\Delta_X]\right).
				 \end{align}
				 Note that $\|\Delta W[\Delta_X]\|_{F}\neq 0$ for controllable $(A,B)$ and $\Delta_X\neq 0$.
				 Minimizing the left-hand side over all $\Delta_X$ with $\|\Delta_{X}\|_{F}^{2}=1$ yields uniform positivity of the Hessian, since the spectrum of $W(X)$ is bounded.

				 Hence, it follows, see e.\,g.\ \cite{BoyV2004}, that the Newton method is quadratically convergent,  whenever $\lambda(X)<.25$ in some intermediate step. Once this level is reached, the methods stays in the quadratically converging regime. If the condition does not hold, then one has to take a smaller stepsize $(1+\lambda(X))^{-1}\Delta_X$ in order to obtain convergence.

				 \subsubsection{Initialization}

				 Note that for the reformulations of the Newton step we have to assume that the starting value $X_0$ is in the interior of the domain. In this section, we show how to compute an initial point $X_0\in \XWpdpd$, which therefore satisfies  the LMIs $W_c(X_0,\M) \succ  0$ and $W_d(X_0,\M) \succ  0$ for the model $\mathcal M = \{A,B,C,D\}$. Since the reasoning for both the continuous-time case and the discrete-time case are very similar, we first focus on the continuous-time case.

				 We start the optimization from a model $\M$ that is minimal and strictly passive. It then follows that {the solution set of $W_c(X_0,\M) \succ  0$} has an interior point $X_0 \succ 0$ such that
				 \[
					 W_c(X_0,\M) \succ  0, \quad 0 \prec  X_{-}\preceq X_0 \preceq X_{+}.
				 \]
				 To construct such an $X_0$, let $\alpha:=\lambda_{\min}W_c(X_0) > 0$ and $\beta:=\max(\|X_0\|_2,1)>0$. Then, for $0< 2\xi \le \alpha/\beta$, we have the inequality
				 \[  W_c(X_0,\M) \succeq 2\xi \left[\begin{array}{cc} X_0 & 0 \\ 0 & I_m \end{array}\right].
				 \]
				 In order to compute a solution $X_0$ for this LMI, we rewrite it, up to a scaling factor, as
					 \[   W_c(X,\M_\xi) := \left[\begin{array}{cc} -(A+\xi I_n)^{\mathsf{H}}X-X(A+\xi I_n) &   C^{\mathsf{H}}-XB \\    C-B^{\mathsf{H}}X & (D-\xi I_m)^{\mathsf{H}}+ (D-\xi I_m) \end{array}\right] \succeq 0.
					 \]
					 for the modified model $\M_\xi :=\{A+\xi I_n,B,C,D-\xi I_m\}$.
					 The solution set of this shifted LMI can be obtained from the extremal solutions $X_{-}(\xi)$ and $X_{+}(\xi)$ of the Riccati equations for
					 $\M_\xi$.
					 It therefore follows that 
					 \[
						 0 \prec  X_{-} \prec   X_{-}(\xi) \preceq X_{+}(\xi) \prec  X_{+}.
					 \]

					 The reasoning for the discrete-time case is very similar. Starting from a strictly passive and minimal model $\M$, we have the inequality
					 \[
						 W_d(\M) \succeq 2\xi \left[\begin{array}{cc} X_0 & 0 \\ 0 & I_m \end{array}\right],
						 \quad \mathrm{for} \quad 0< 2\xi \le \alpha/\beta=\lambda_{\min}W_d(X_0)/\max(\|X_0\|_2,1).
					 \]
					 In order to compute a solution $X_0$ for this LMI, we rewrite it as an LMI
					 \[
						 W_d(X_0,\M_\xi) := \left[\begin{array}{cc} X_0-A_\xi^{\mathsf{H}}X_0A_\xi &   C_\xi^{\mathsf{H}}-A_\xi^{\mathsf{H}} X_0B_\xi \\    C_\xi-B_\xi^{\mathsf{H}}X_0 A_\xi & D_\xi ^{\mathsf{H}}+ D_\xi -B_\xi^{\mathsf{H}} X_0B_\xi  \end{array}\right] \succeq 0
					 \]
					 for the modified model $\M_\xi :=\{A_\xi,B_\xi,C_\xi,D_\xi \} :=
					 \{A/\sqrt{1-2\xi},B/\sqrt{1-2\xi},C/(1-2\xi),(D-\xi I_m)/(1-2\xi) \}$.
					 The solution set  $X_{-}(\xi)\preceq X_0 \preceq X_{+}(\xi)$ of this scaled LMI is again strictly included in the original solution set.

					 The procedure to find an inner point is thus to choose one of the Riccati solutions $X_{-}(\xi)$ or $X_{+}(\xi)$ of shifted or scaled				 problems, respectively, or some kind of average of both, since they are then guaranteed to be an interior point of the original problem.

					 Another possibility to compute an initial point is to take the \emph{geometric mean} of
					 the minimal and maximal solution of the Riccati equations \eqref{riccatic}, respectively \eqref{riccatid}, denoted by $X_-$ and $X_+$, which is defined by
					 $X_0 = X_- ( X_-^{-1}X_+)^{\frac12}$, see \cite{Moa05}.
					 However, e.\,g., if $X_-$ and $X_+$ are multiples of the identity matrix, then the geometric mean is a convex combination of $X_-$ and $X_+$ and will not necessarily be in the interior.


					 \subsection{Numerical results}\label{sec:numericalresults}
					 We have implemented the steepest ascent method of Subsection~\ref{sec:ascent} and the Newton method introduced in Subsection~\ref{sec:newton}. The
					 software package is written in \texttt{python 3.6}. The code and all the
					 examples can be downloaded under \cite{BanMNV19_code}. 

					 We have performed several experiments to test convergence for the different methods developed in this paper.

					 \begin{example}\label{ex:1}
						 As a prototypical  example consider a randomly generated continuous-time example with $n=30$ and $m=10$, \ie the overall dimension of the matrix $W_c(X)$ is $40\times 40$ and we have a total of $465$ unknowns.
						 \\ As one would expect, the steepest ascent method shows linear convergence behavior, whereas the Newton method has quadratic convergence as soon as one is close enough to the analytic center.

						 Figure~\ref{fig:convergence_plots} shows the convergence behavior using the Newton method. Note, that the barrier function $\det(W(X))$ increases monotonously, whereas the distance of the argument $X$ to the analytic center $X_c$ slightly increases in the linearly convergent phase.
						 The number of steps required in the steepest ascent approach, however, is much  higher than in the Newton approach.
					 \end{example}

					 Also, the initial point computed by the geometric mean approach turns out to be much better in all the practical examples, even though one cannot guarantee positivity in some extreme cases.

					 Note that one has to be extremely careful with the implementation of the algorithm. Without explicitly forcing the intermediate solutions $X_k$ to be Hermitian in finite precision arithmetic, the intermediate Riccati residuals $P_k$ may diverge from the Hermitian subspace.

					 \begin{figure}
						 \centering
						 \begin{subfigure}{0.5\textwidth}
							 \centering
							 \includegraphics[width=\textwidth]{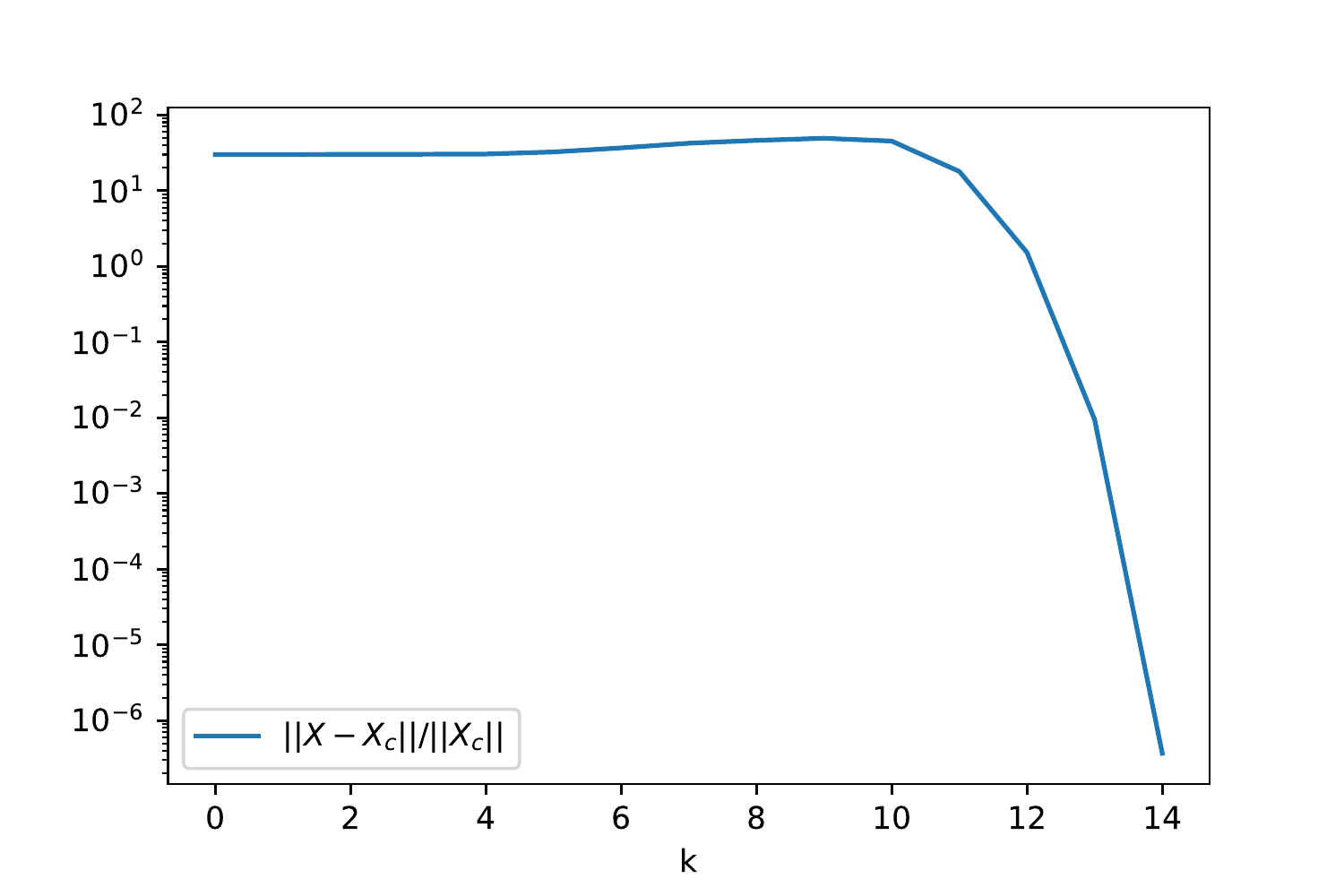}
							 \caption{Convergence of the relative error between the current solution $X_k$ and the analytic center $X_c$\\} 
						 \end{subfigure}%
						 ~
						 \begin{subfigure}{0.5\textwidth}
							 \centering
							 \includegraphics[width=\textwidth]{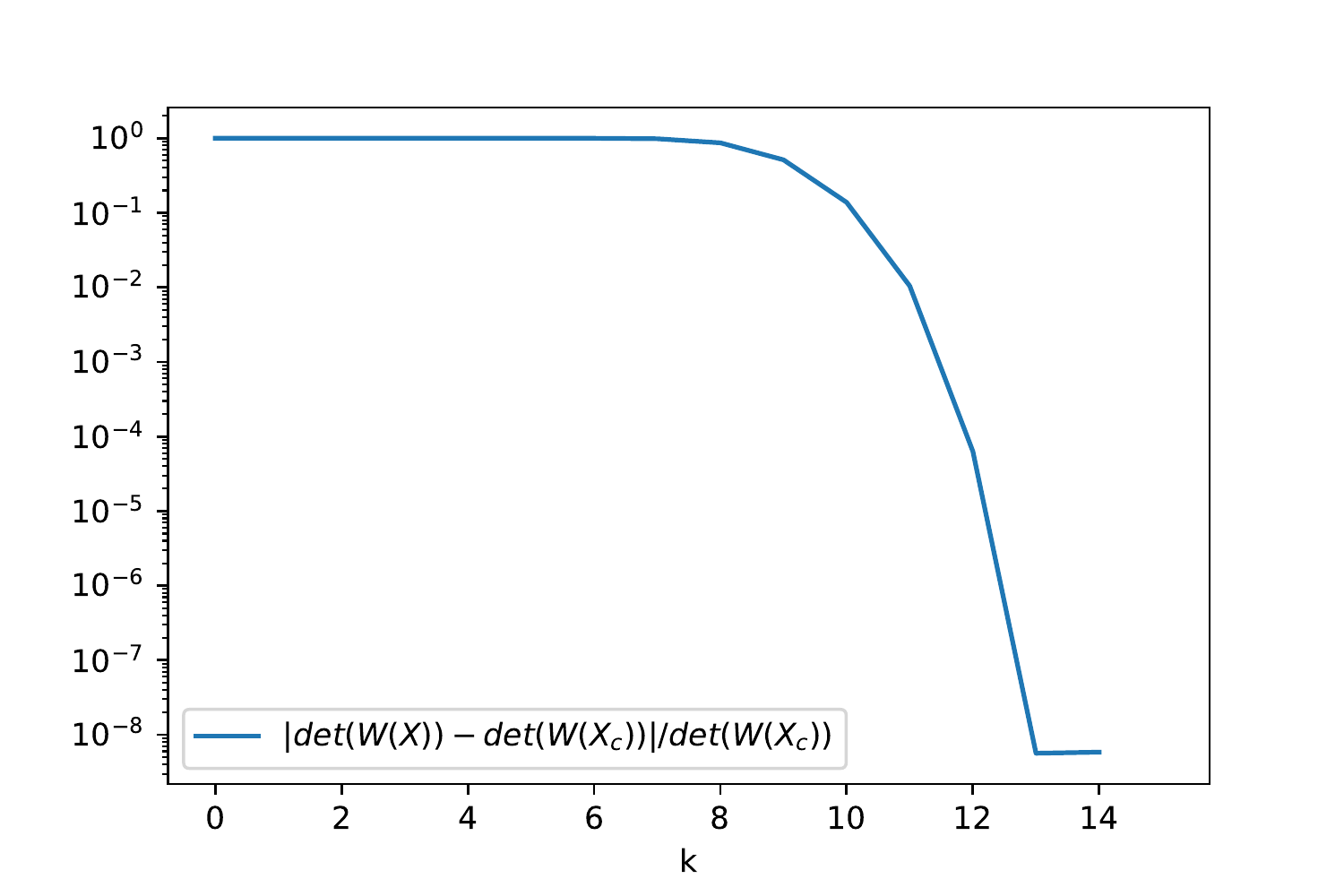}
							 \caption{Convergence of the relative error between the current value of the objective function {$\det(W_c(X_k))$ and the value $\det(W_c(X_c))$} at the analytic center } 
						 \end{subfigure}%
						 \caption{Convergence behavior for the Newton method applied to the example as in Example~\ref{ex:1}}\label{fig:convergence_plots}
					 \end{figure}

					 \section{Computation of bounds for the passivity radius} \label{sec:passivity}
					 Once we have found a solution $X\in\XWpdpdc$, respectively $X\in \XWpdpdd$, we can use this solution to find an estimate of the \emph{passivity radius} of our system, \ie the smallest perturbation $\Delta_{\M}$ to the system coefficients $\M=\{A,B,C,D\}$ that puts the system on the boundary of the set of passive systems, so that an arbitrary small further perturbation makes the system non-passive. In this section we derive {\em lower bounds} \/for the passivity radius in terms of the smallest eigenvalue of a scaled version of the matrices $W_c(X,\M)$ or $W_d(X,\M)$, respectively. Since the analytic center is central to the solution set of the LMI, we choose it for the realization of the transfer function, since then we expect  to maximize a very good lower bound for the passivity radius.

					 \subsection{The continuous-time case}
					 As soon as we fix $X\in \XWpdpdc$, the matrix
					 \begin{equation} \label{hx}
						 W_c(X,\M) = \left[
							 \begin{array}{cc}
								 -A^{\mathsf{H}}X - X\,A & C^{\mathsf{H}}-X\,B \\
								 C-B^{\mathsf{H}}X & D+D^{\mathsf{H}}
							 \end{array}
						 \right]
					 \end{equation}
					 is linear as a function of the coefficients  $A,B,C,D$.
					 When perturbing the coefficients, we thus preserve strict passivity, as long as
					 \begin{align}
& W_c(X,\M+\Delta_\M)\\
&\qquad :=\left[
	\begin{array}{cc}
		-(A+\Delta_A)^{\mathsf{H}}X - X\,(A+\Delta_A) & (C+\Delta_C)^{\mathsf{H}}-X (B+\Delta_B)) \\
		(C+\Delta_C)-(B+\Delta_B)^{\mathsf{H}}X & (D+\Delta_D) +(D+\Delta_D)^{\mathsf{H}}
	\end{array}
					 \right] \\ &\qquad \succ 0.
				 \end{align}
				 We thus suppose that $W_c(X,{\M})\succ  0$ and look for the smallest perturbation $\Delta_{\M}$ to our model $\M$ that makes $\det W_c(X,\M+\Delta_{\M})=0$. To measure the model perturbation, we propose to use the norm of the perturbation of the system pencil
				 \[ \|\Delta_\M \| :=  \left \| \left[\begin{array}{ccc}
							 0 & \Delta_A & \Delta_B \\
							 \Delta_A^{\mathsf{H}} & 0 & \Delta_C^{\mathsf{H}} \\
							 \Delta_B^{\mathsf{H}} & \Delta_C & \Delta_D+\Delta_D^{\mathsf{H}}
							 \end{array}\right] \right \|_2 \approx \left \|\left[\begin{array}{ccc}
							 \Delta_A & \Delta_B \\
							 \Delta_C & \Delta_D
					 \end{array}\right] \right \|_2.
					 \]
					 We have the following lower bound in terms of the smallest eigenvalue $\lambda_{\min}$ of a scaled version of $W_c(X,\M)$.
					 \begin{lemma} \label{lowerbound}
						 The \emph{$X$-passivity radius}, defined for a given $X\in \XWpdpdc$  as
						 \[
							 \rho_\M^c(X):= \inf_{\Delta_\M}\{ \| \Delta_\M \|  | \det W_c(X,\M+\Delta_\M) = 0\},
						 \]
						 satisfies
						 \begin{equation} \label{bound}
							 \lambda_{\min}(Y_c W_c(X,\M) Y_c) \le \rho_\M^c(X),
						 \end{equation}
						 for
						 \[
							 Y_c:= \left[\begin{array}{cc}
									 I_n+X^2 & 0 \\ 0 & I_m
							 \end{array}\right]^{-\frac{1}{2}}\preceq I_{n+m}.
						 \]
					 \end{lemma}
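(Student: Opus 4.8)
The plan is to realize $W_c(X,\M)$ as a congruence transform of the constant (frequency-independent) part of the system pencil $S_c$, using a congruence matrix that depends only on $X$. Concretely, I would set
\[
	Z := \left[\begin{array}{cc} -X & 0 \\ I_n & 0 \\ 0 & I_m \end{array}\right], \qquad
	\widehat S(\M) := \left[\begin{array}{ccc} 0 & A & B \\ A^{\mathsf{H}} & 0 & C^{\mathsf{H}} \\ B^{\mathsf{H}} & C & R \end{array}\right],
\]
so that a direct block multiplication gives $W_c(X,\M) = Z^{\mathsf{H}}\widehat S(\M)Z$ and, crucially, $Z^{\mathsf{H}}Z = \left[\begin{array}{cc} I_n+X^2 & 0 \\ 0 & I_m \end{array}\right] = Y_c^{-2}$. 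Since $W_c(X,\M)$ is linear in the coefficients $A,B,C,D$ and $Z$ does not depend on them, perturbing $\M \mapsto \M+\Delta_\M$ yields $W_c(X,\M+\Delta_\M) = W_c(X,\M) + Z^{\mathsf{H}}(\Delta\widehat S)Z$, where $\Delta\widehat S$ is exactly the Hermitian pencil perturbation whose spectral norm equals $\|\Delta_\M\|$.

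With this identity the argument is short. I would suppose $\det W_c(X,\M+\Delta_\M)=0$ and pick a nonzero $v$ in the kernel, so that $W_c(X,\M)v = -Z^{\mathsf{H}}(\Delta\widehat S)Zv$. Taking the inner product with $v$ and using $W_c(X,\M)\succ 0$ (which holds since $X\in\XWpdpdc$) gives
\[
	0 < v^{\mathsf{H}}W_c(X,\M)v = -(Zv)^{\mathsf{H}}(\Delta\widehat S)(Zv) \le \|\Delta\widehat S\|_2\,\|Zv\|^2 = \|\Delta_\M\|\,\bigl(v^{\mathsf{H}}Y_c^{-2}v\bigr),
\]
where the final equality uses $\|Zv\|^2 = v^{\mathsf{H}}Z^{\mathsf{H}}Zv = v^{\mathsf{H}}Y_c^{-2}v$.

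To bring in the scaled matrix I would invoke the variational characterization of the smallest eigenvalue with the substitution $w=Y_c^{-1}v$ (so $Y_c w=v$ and $w^{\mathsf{H}}w = v^{\mathsf{H}}Y_c^{-2}v$):
\[
	\lambda_{\min}(Y_cW_c(X,\M)Y_c) \le \frac{w^{\mathsf{H}}Y_cW_c(X,\M)Y_c w}{w^{\mathsf{H}}w} = \frac{v^{\mathsf{H}}W_c(X,\M)v}{v^{\mathsf{H}}Y_c^{-2}v} \le \|\Delta_\M\|.
\]
Since this holds for every $\Delta_\M$ that drives $W_c$ to singularity, taking the infimum over such perturbations gives $\lambda_{\min}(Y_cW_c(X,\M)Y_c)\le\rho_\M^c(X)$, as claimed; the relation $Y_c\preceq I_{n+m}$ follows at once from $I_n+X^2\succeq I_n$.

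The one step that requires care — and the real crux — is the choice of the congruence $Z$. A naive approach writes the perturbation as $\Delta W = G\Delta + \Delta^{\mathsf{H}}G$ with $G=\mathrm{diag}(-X,I_m)$ and bounds $|v^{\mathsf{H}}\Delta W v|\le 2\|Gv\|\,\|\Delta\|_2\,\|v\|$, which introduces a spurious factor of $2$. Routing the perturbation through $Z$ with the exact identity $Z^{\mathsf{H}}Z=Y_c^{-2}$ is precisely what matches the scaling $Y_c$ to the pencil norm $\|\Delta_\M\|$ and removes that factor. Verifying $W_c(X,\M)=Z^{\mathsf{H}}\widehat S(\M)Z$ and $Z^{\mathsf{H}}Z=Y_c^{-2}$ from the block products is then routine, and the rest is Cauchy--Schwarz plus the Courant--Fischer inequality.
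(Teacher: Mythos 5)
Your proof is correct and takes essentially the same route as the paper: your $Z$ is exactly the paper's stacked matrix $\left[\begin{array}{c} Z_c \\ I_{m+n}\end{array}\right]$ with $Z_c=[\,-X \ \ 0\,]$, and your identity $Z^{\mathsf{H}}Z = Y_c^{-2}$ is precisely what makes the paper's $U_c = ZY_c$ have orthonormal columns, so that the perturbation enters $Y_cW_c(X,\M+\Delta_\M)Y_c$ as a contraction $U_c^{\mathsf{H}}\Delta\widehat S\,U_c$ of the pencil perturbation. The only difference is cosmetic: you spell out the final eigenvalue bound with an explicit kernel vector, Cauchy--Schwarz, and a Rayleigh quotient, whereas the paper invokes the same fact at the matrix level (the smallest perturbation making a positive definite matrix singular has $2$-norm at least its smallest eigenvalue).
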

					 \begin{proof}
						 We first note that
						 %
						 \begin{align}
& \det \left[\begin{array}{cccc}
		0 & I_n & X & 0 \\
		I_n & 0 & A+\Delta_A & B+\Delta_B \\
		X & A^{\mathsf{H}}+\Delta_A^{\mathsf{H}} & 0 & C^{\mathsf{H}}+\Delta_C^{\mathsf{H}} \\
		0 & B^{\mathsf{H}}+\Delta_B^{\mathsf{H}} &  C+\Delta_C & R+\Delta_R^{\mathsf{H}}
\end{array}\right] \nonumber \\
& \qquad
=\det  \left[\begin{array}{cc}
0 & I_n \\  I_n & 0  \end{array}\right]
\det W_c(X,\M+\Delta_\M),
\label{embed}
\end{align}
since $W_c(X,\M+\Delta_\M)$ is just the Schur complement with respect to the
leading $2n\times 2n$ matrix.
Here we have set $R:=D+D^{\mathsf{H}}$ and $\Delta_R:= \Delta_D+\Delta_D^{\mathsf{H}}$.

If we introduce the $n\times (n+m)$ matrix $Z_c := \left[\begin{array}{cc} -X & 0 \end{array}\right]$, then \eqref{embed} is equivalent to
\[
\left[\begin{array}{c} Z_c \\ I_{m+n} \end{array}\right]^{\mathsf{H}}
\left[\begin{array}{ccc}
 0 & A+\Delta_A & B+\Delta_B \\
 A^{\mathsf{H}}+\Delta_A^{\mathsf{H}} & 0 & C^{\mathsf{H}}+\Delta_C^{\mathsf{H}} \\
 B^{\mathsf{H}}+\Delta_B^{\mathsf{H}} &   C+\Delta_C & R+\Delta_R
\end{array}\right]  \left[\begin{array}{c} Z_c \\ I_{m+n} \end{array}\right] =
W_c(X, \M+\Delta_\M).
\]
If we replace the matrix $\left[\begin{array}{c} Z_c \\ I_{m+n} \end{array}\right]$ by the matrix  $U_c = \left[\begin{array}{c} Z_c \\ I_{m+n} \end{array}\right]Y_c$ with orthonormal columns, which we can e.\,g.\ obtain from a QR decomposition \cite{GolV96}, 
then we obtain
\begin{align} & U_c^{\mathsf{H}} \left[\begin{array}{ccc}
	 0 & A+\Delta_A & B+\Delta_B \\
	 A^{\mathsf{H}}+\Delta_A^{\mathsf{H}} & 0 & C^{\mathsf{H}}+\Delta_C^{\mathsf{H}} \\
	 B^{\mathsf{H}}+\Delta_B^{\mathsf{H}} &   C+\Delta_C & R+\Delta_R
\end{array}\right] U_c \\
       &\qquad = Y_cW_c(X,\M+\Delta_\M)Y_c \\
       &\qquad=
       Y_c W_c(X,\M) Y_c
	 +U_c^{\mathsf{H}}\left[\begin{array}{ccc} 0 & \Delta_A & \Delta_B \\
			 \Delta_A^{\mathsf{H}} & 0 & \Delta_C^{\mathsf{H}} \\
			 \Delta_B^{\mathsf{H}} & \Delta_C & \Delta_R
	 \end{array}\right]U_c.
		 \end{align}
		 Therefore, the smallest perturbation of the matrix $ Y_c W_c(X,\M) Y_c$ to make
		 $Y_cW_c(X,\M+\Delta_\M)Y_c$ singular must have a 2-norm which is at least
		 as large as $\lambda_{\min}( Y_c W_c(X) Y_c)$, and
		 since the perturbation
		 is a contraction of the proposed one,
		 the lower bound in (\ref{bound}) follows.
	 \end{proof}

	 \subsection{The discrete-time case}
	 In the discrete-time case, for a fixed $X$ the LMI takes the form
	 \begin{equation} \label{hdx}
		 W_d(X) =
		 \left[\begin{array}{cc}
				 -A^{\mathsf{H}}XA +  X & C^{\mathsf{H}} -A^{\mathsf{H}}XB \\
				 C-B^{\mathsf{H}}XA & D+D^{\mathsf{H}}-B^{\mathsf{H}}XB
			 \end{array}
		 \right]
		 \succeq 0,
	 \end{equation}
	 and its perturbed version is
	 \begin{align}
&W_d(X,\M+\Delta_\M) \\
&  \quad :=\left[\begin{array}{cc}
		-(A+\Delta_A)^{\mathsf{H}}X(A+\Delta_A)+ X & (C+\Delta_C)^{\mathsf{H}}- (A+\Delta_A)^{\mathsf{H}}X(B+\Delta_B) \\
		C+\Delta_C-(B+\Delta_B)^{\mathsf{H}}X(A+\Delta_A) & R+\Delta_R -(B+\Delta_B)^{\mathsf{H}}X(B+\Delta_B)
	\end{array}
\right]\\
& \quad  \succ  0,
	 \end{align}
 where again  $R:=D+D^{\mathsf{H}}$ and $\Delta_R:= \Delta_D+\Delta_D^{\mathsf{H}}$.

 Note that, in contrast  to the continuous-time case,  for given $X\in \XWpdpdd$ ,
 $W_d(X,\M+\Delta_{\M})$ is not linear in the perturbations. Nevertheless, we have an analogous bound as in Lemma~\ref{lowerbound} also in the discrete-time case.
 \begin{lemma} \label{lowerboundDT}
	 The \emph{$X$-passivity radius}, defined for a given $X\in \XWpdpdd$  as
	 \[
		 \rho_\M^d(X):= \inf_{\Delta_\M}\{ \| \Delta_\M \|  | \det W_d(X,\M+\Delta_\M) = 0\},
	 \]
	 satisfies
	 \begin{align}
		 \nonumber	& \lambda_{\min}( Y_d \left( W_d(X,\M) -
			 \left[\begin{array}{cc} A^{\mathsf{H}} +I_n \\ B^{\mathsf{H}}  \end{array}\right]\frac{X}{2}\left[\begin{array}{cc} \Delta_A & \Delta_B  \end{array}\right] -
		 \left[\begin{array}{c} \Delta_A^{\mathsf{H}}\\ \Delta_B^{\mathsf{H}} \end{array} \right]\frac{X}{2}\left[\begin{array}{cc} A +I_n & B  \end{array}\right]  \right) Y_d)\\
		 \label{boundd} & \qquad \le \rho^d_\M(X),
	 \end{align}
	 where
	 \[
		 Y_d:= \left[\begin{array}{cc}
				 I_{n+m} + Z_d^{\mathsf{H}}Z_d
			 \end{array}\right]^{-\frac{1}{2}}\preceq I_{n+m}, \ Z_d=-\left[\begin{array}{cc} \frac{X}{2}(A+\Delta_A-I_n) & \frac{X}{2}(B+\Delta_B) \end{array}\right].
		 \]
	 \end{lemma}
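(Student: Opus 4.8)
The plan is to imitate the proof of Lemma~\ref{lowerbound}; the one genuinely new feature is that $W_d(X,\M+\Delta_\M)$ depends \emph{quadratically} on $\Delta_A$ and $\Delta_B$, and the purpose of the modified matrix inside $\lambda_{\min}$ in \eqref{boundd} is precisely to absorb that quadratic part into the congruence factors. Writing $\widetilde A:=A+\Delta_A$, $\widetilde B:=B+\Delta_B$, $\widetilde C:=C+\Delta_C$, $\widetilde R:=R+\Delta_R$, and splitting the matrix of the lemma as $Z_d=[\,Z_{d,1}\ \ Z_{d,2}\,]$ with $Z_{d,1}=\tfrac12 X(I_n-\widetilde A)$ and $Z_{d,2}=-\tfrac12 X\widetilde B$, I would first establish the congruence factorization
\[
W_d(X,\M+\Delta_\M)=\begin{bmatrix}Z_d\\ I_{n+m}\end{bmatrix}^{\mathsf H}\Lambda_d\begin{bmatrix}Z_d\\ I_{n+m}\end{bmatrix},\qquad \Lambda_d:=\begin{bmatrix}0 & I_n+\widetilde A & \widetilde B\\ I_n+\widetilde A^{\mathsf H} & 0 & \widetilde C^{\mathsf H}\\ \widetilde B^{\mathsf H} & \widetilde C & \widetilde R\end{bmatrix}.
\]
This is the discrete-time analogue of the Schur-complement embedding \eqref{embed}. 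The only nontrivial point in verifying it is the symmetrization identity $X-\widetilde A^{\mathsf H}X\widetilde A=Z_{d,1}^{\mathsf H}(I_n+\widetilde A)+(I_n+\widetilde A^{\mathsf H})Z_{d,1}$, which forces the off-diagonal blocks of $\Lambda_d$ to carry $I_n+\widetilde A$ rather than $\widetilde A$; the remaining blocks then come out as the entries of \eqref{hdx} by direct multiplication.

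Next I would split off the perturbation. Write $\Lambda_d=\Lambda_d^{(0)}+\Delta\Lambda$, where $\Lambda_d^{(0)}$ is the same pencil with $A,B,C,R$ in place of $\widetilde A,\widetilde B,\widetilde C,\widetilde R$, and
\[
\Delta\Lambda=\begin{bmatrix}0 & \Delta_A & \Delta_B\\ \Delta_A^{\mathsf H} & 0 & \Delta_C^{\mathsf H}\\ \Delta_B^{\mathsf H} & \Delta_C & \Delta_R\end{bmatrix},\qquad \|\Delta\Lambda\|_2=\|\Delta_\M\|.
\]
A short computation, again using the symmetrization identity but now with $A$ in the pencil while $\widetilde A$ is retained in $Z_d$, shows that $\begin{bmatrix}Z_d\\ I_{n+m}\end{bmatrix}^{\mathsf H}\Lambda_d^{(0)}\begin{bmatrix}Z_d\\ I_{n+m}\end{bmatrix}$ equals exactly the modified matrix $M:=W_d(X,\M)-\begin{bmatrix}A^{\mathsf H}+I_n\\ B^{\mathsf H}\end{bmatrix}\tfrac{X}{2}[\,\Delta_A\ \ \Delta_B\,]-\begin{bmatrix}\Delta_A^{\mathsf H}\\ \Delta_B^{\mathsf H}\end{bmatrix}\tfrac{X}{2}[\,A+I_n\ \ B\,]$ that appears in \eqref{boundd}. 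The linear correction terms are precisely the footprint left by $\Delta_A,\Delta_B$ inside $Z_d$, which is why only $\Delta_A,\Delta_B$ (and not $\Delta_C,\Delta_R$) occur in $M$.

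Then I would orthonormalize and run the contraction argument exactly as in Lemma~\ref{lowerbound}. Since $I_{n+m}+Z_d^{\mathsf H}Z_d=\begin{bmatrix}Z_d\\ I_{n+m}\end{bmatrix}^{\mathsf H}\begin{bmatrix}Z_d\\ I_{n+m}\end{bmatrix}$ and $Y_d=(I_{n+m}+Z_d^{\mathsf H}Z_d)^{-\frac12}$, the matrix $U_d:=\begin{bmatrix}Z_d\\ I_{n+m}\end{bmatrix}Y_d$ has orthonormal columns, and therefore
\[
Y_d\,W_d(X,\M+\Delta_\M)\,Y_d=U_d^{\mathsf H}\Lambda_d U_d=Y_d M Y_d+U_d^{\mathsf H}\Delta\Lambda U_d,\qquad \|U_d^{\mathsf H}\Delta\Lambda U_d\|_2\le\|\Delta_\M\|.
\]
Because $Y_d\succ0$ is invertible, $\det W_d(X,\M+\Delta_\M)=0$ is equivalent to $Y_d M Y_d+U_d^{\mathsf H}\Delta\Lambda U_d$ being singular, and Weyl's inequality then gives $\lambda_{\min}(Y_d M Y_d)\le\|U_d^{\mathsf H}\Delta\Lambda U_d\|_2\le\|\Delta_\M\|$. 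Taking the infimum over all singularizing $\Delta_\M$ yields \eqref{boundd}.

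The main obstacle is the first step: identifying the Hermitian pencil $\Lambda_d$ so that the quadratic terms $\widetilde A^{\mathsf H}X\widetilde A$ and $\widetilde B^{\mathsf H}X\widetilde B$ are generated entirely by the $\Delta$-dependent factor $\begin{bmatrix}Z_d\\ I_{n+m}\end{bmatrix}$, while the perturbation $\Delta\Lambda$ remains the \emph{linear}, norm-$\|\Delta_\M\|$ block matrix of the continuous-time case. The symmetrized choice $I_n+\widetilde A$ in the off-diagonal blocks is what makes this possible, and it is also the reason the bound is \emph{implicit}: unlike the continuous case, the matrix inside $\lambda_{\min}$ still depends on the (minimizing) perturbation through $Z_d$ and $Y_d$.
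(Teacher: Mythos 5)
Your proposal is correct and follows essentially the same route as the paper's own proof: the same symmetrized embedding with off-diagonal blocks $I_n+A+\Delta_A$ (the paper's display \eqref{embedd}), the same factor $\left[\begin{array}{cc} Z_d^{\mathsf{H}} & I_{n+m}\end{array}\right]$, the same split of the pencil into an unperturbed part plus the block perturbation matrix (which is exactly how the modified matrix in \eqref{boundd} arises), and the same orthonormalization via $Y_d$ followed by the contraction argument. If anything, your explicit appeal to Weyl's inequality makes the final step cleaner than the paper's wording, which passes through an ``$\approx\lambda_{\min}(Y_d W_d(X,\M)Y_d)$'' comparison instead of stating the exact bound with the modified matrix.
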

	 \begin{proof}
		 We first observe that
		 %
		 \begin{align} \nonumber
& \det \left[\begin{array}{cccc}
0 & I_n & \frac{X}{2}(A+\Delta_A-I_n) & \frac{X}{2}(B+\Delta_B) \\
I_n & 0 & A+\Delta_A+I_n & B+\Delta_B \\
(A^{\mathsf{H}}+\Delta_A^{\mathsf{H}}-I_n)\frac{X}{2} & A^{\mathsf{H}}+\Delta_A^{\mathsf{H}}+I_n & 0 & C^{\mathsf{H}}+\Delta_C^{\mathsf{H}} \\
(B^{\mathsf{H}}+\Delta_B^{\mathsf{H}})\frac{X}{2} & B^{\mathsf{H}}+\Delta_B^{\mathsf{H}} &   C+\Delta_C & R+\Delta_R
\end{array}\right]  \\
			 \label{embedd} & \quad=
			 \det  \left[\begin{array}{cc}
			 0 & I_n \\  I_n & 0  \end{array}\right]
			 \det W_d(X,\M+\Delta_\M),
		 \end{align}
		 since again $W_d(X,\M+\Delta_\M)$ is just the Schur complement with respect to the
		 leading $2n\times 2n$ matrix. Note that this matrix \eqref{embedd} is linear in the perturbation parameters, since $X$ is fixed. Using the definition of the matrix $Z_d$, then from \eqref{embedd}, it follows that we can consider
		 \begin{align}
&  \left[\begin{array}{cc}  Z_d^{\mathsf{H}} & I_{m+n} \end{array}\right]
\left[\begin{array}{ccc}
0 & A+\Delta_A+I_n & B+\Delta_B \\
A^{\mathsf{H}}+\Delta_A^{\mathsf{H}}+I_n & 0 & C^{\mathsf{H}}+\Delta_C^{\mathsf{H}} \\
B^{\mathsf{H}}+\Delta_B^{\mathsf{H}} &   C+\Delta_C & R+\Delta_R
\end{array}\right]  \left[\begin{array}{c} Z_d \\ I_{m+n} \end{array}\right]\\
			     & \qquad =
			     W_d(X,\M+\Delta_\M).
\end{align}
If we replace the matrix $\left[\begin{array}{c} Z_d\\ I_{m+n} \end{array}\right]$ by the matrix
with orthonormal columns   $U_d = \left[\begin{array}{c}  Z_d \\ I_{m+n} \end{array}\right]Y_d$, 
then we have
\[ U_d^{\mathsf{H}}
\left[\begin{array}{ccc}
	0 & A+\Delta_A+I_n & B+\Delta_B \\
	A^{\mathsf{H}}+\Delta_A^{\mathsf{H}}+I_n & 0 & C^{\mathsf{H}}+\Delta_C^{\mathsf{H}} \\
	B^{\mathsf{H}}+\Delta_B^{\mathsf{H}} &   C+\Delta_C & R+\Delta_R
\end{array}\right] U_d = Y_d W_d(X,\M+\Delta_\M)Y_d
\]
from which it follows that
\begin{align}
&Y_d W_d(X,\M+\Delta_\M)Y_d =  U_d^{\mathsf{H}}
\left[\begin{array}{ccc}
0 & \Delta_A & \Delta_B \\
\Delta_A^{\mathsf{H}} & 0 & \Delta_C^{\mathsf{H}} \\
\Delta_B^{\mathsf{H}} &   \Delta_C & \Delta_R
\end{array}\right] U_d \\
&
+ Y_d \left( W_d(X,\M)- \left[\begin{array}{cc} A^{\mathsf{H}} +I_n \\ B^{\mathsf{H}}  \end{array}\right]\frac{X}{2}\left[\begin{array}{cc} \Delta_A & \Delta_B  \end{array}\right] -
\left[\begin{array}{c} \Delta_A^{\mathsf{H}}\\ \Delta_B^{\mathsf{H}} \end{array} \right]\frac{X}{2}\left[\begin{array}{cc} A +I_n & B  \end{array}\right]  \right) Y_d,
\end{align}
and the smallest perturbation of the matrix $ Y_d W_d(X,\M) Y_d$ needed to make
$Y_dW_d(X,\M+\Delta_\M)Y_d$ singular must have a 2-norm which is at least
as large as
\[\lambda_{\min}(  U_d^{\mathsf{H}} \left[\begin{array}{ccc}
	0 & A +I_n & B\\
	A^{\mathsf{H}} +I_n & 0 & C^{\mathsf{H}}\\
	B^{\mathsf{H}} &   C & R
\end{array}\right] U_d) \approx \lambda_{\min}( Y_d W_d(X,\M) Y_d).
\]
Again, since the perturbation
is a contraction of the proposed one, the (approximate) lower bound in (\ref{boundd}) follows.
\end{proof}

\subsection{Examples with analytic solution}
In this subsection, to illustrate the results, we present simple examples of scalar transfer functions ($m=1$) of first degree ($n=1$).

Consider first an asymptotically stable continuous-time system and transfer function  $T(s)=d+\frac{cb}{s-a}$ \ie with $a<0$. Then
\[
W_c(x) = \left[\begin{array}{cc}
-2ax  & c-bx \\ c-bx & 2d \end{array}\right]
\]
and its determinant is $\det (W_c(x)) = -4adx-(c-bx)^2$, which is maximal at the central point $x_a=\frac{c}{b}-\frac{2ad}{b^2}$. We then get
\[
W_c(x_a) = \left[\begin{array}{cc}
4d\frac{a^2}{b^2}-2c\frac{a}{b} & 2d\frac{a}{b} \\ 2d\frac{a}{b} & 2d \end{array}\right]
= \left[\begin{array}{cc} 1 & \frac{a}{b} \\ 0 & 1 \end{array}\right].
\left[\begin{array}{cc}p & 0 \\ 0  & 2d \end{array}\right]
\left[\begin{array}{cc} 1 & 0 \\ \frac{a}{b} & 1\end{array}\right],
\]
with $p=2d\frac{a^2}{b^2}-2c\frac{a}{b}$
which implies that $\det (W_c(x_a))=2d \cdot p$.
For the transfer function
to be strictly passive, it must be asymptotically stable and positive on the imaginary axis and hence also
at $0$ and $\infty$. Thus, we have the conditions
\begin{equation} \label{condCT}  a < 0,\ d > 0, \ \frac{da-cb}{a} > 0.
\end{equation}
The function $\Phi_c(\imath\omega)=2d - \frac{2acb}{a^2+\omega^2}$
is a unimodal function, which reaches its minimum either at $0$ (namely
$\Phi_c(0)=p\frac{b^2}{a^2}$) or at $\infty$ (namely $\Phi_c(\infty)=2d$)
and hence the conditions in \eqref{condCT} are sufficient to check passivity.
Thus, for the  model $\M$,  strict  passivity gets lost when either one of the following happens
\[ d+\delta_d=0, \quad a+\delta_a = 0, \quad
\left[ \begin{array}{cc} c+\delta_c & d+\delta_d \end{array} \right]
\left[ \begin{array}{c} -b-\delta_b \\ a+\delta_a \end{array} \right]=0.
\]
Therefore, it follows that
\[
\rho = \min\left(d, a, \sigma_2 \left[ \begin{array}{ccc} a & b \\ c & d  \end{array} \right]\right)=  \sigma_2 \left[ \begin{array}{ccc} a & b \\ c & d  \end{array} \right]
\]
At the analytic center $x_a$ we have
\[
\det W_c(x_a)=2dp=  4\frac{ad}{b^2}(ad-bc)
\]
and the smallest perturbation of the parameters that makes this determinant go to $0$, yields exactly the same conditions as \eqref{condCT}.
This illustrates that the $X$-passivity radius at the analytic center yields a very good condition for strict passivity of the model.

In the discrete-time case the transfer function is $T(z)=d+\frac{cb}{z-a}$ and for it be asymptotically stable we need $a^2<1$, when we assume the coefficients to be real. Then
\[
W_d(x) =
\begin{bmatrix}
	x- a^2x & c -abx\\
	c - abx & 2d - b^2x
\end{bmatrix}
\]
and the analytic center, where $ \det W_d(x) = (1-a^2)x(2d-b^2x)-(c-abx)^2$ is maximal, is given by $x_a=\frac{d-a^2d+a b c}{b^2}$ with
\[
\det W_d(x_a)=\frac{\left(a^2-1\right) (bc-(a-1)d) (b c-(a+1)d)}{b^2}.
\]
The function $\Phi_d(z) =
\frac{b c}{\frac{1}{z}-a}+\frac{b c}{z-a}+2 d$ will be minimal on the unit circle at $z=1$ or $z=-1$. Thus positivity will be lost, when either $a$ reaches $1$ or $-1$, or $bc -(a-1)d=0$ or $bc -(a+1)d=0$. This is exactly the condition also reflected in the determinant of $W(x_c)$ at the analytic center~$x_a$. This again illustrates that the $X$-passivity radius at the analytic center gives a good bound the passivity radius of the system.

\section{Concluding remarks}
We have derived conditions for the analytic center of the linear matrix inequalities (LMIs)
associated with the passivity of linear continuous-time or discrete-time systems.
We have presented numerical methods to compute these analytic centers with steepest ascent and Newton-like methods and we have presented lower bounds for the passivity radii associated with the LMIs evaluated at the respective analytic center.

\bibliographystyle{plainurl}
\bibliography{BMNV2017}

\begin{thebibliography}{10}

\bibitem{BanMNV19_code}
D.~Bankmann, V.~Mehrmann, Y.~Nesterov, and P.~Van~Dooren.
\newblock {Code and examples for the paper 'Computation of the analytic center
  of the solution set of the linear matrix inequality arising in continuous-
  and discrete-time passivity analysis'}, April 2019.
\newblock URL: \url{https://doi.org/10.5281/zenodo.2643171}, \href
  {http://dx.doi.org/10.5281/zenodo.2643171}
  {\path{doi:10.5281/zenodo.2643171}}.

\bibitem{BeaMV19}
C.~Beattie, V.~Mehrmann, and P.~{Van Dooren}.
\newblock Robust port-{H}amiltonian representations of passive systems.
\newblock {\em Automatica}, 100:182--186, February 2019.
\newblock \href {http://dx.doi.org/10.1016/j.automatica.2018.11.013}
  {\path{doi:10.1016/j.automatica.2018.11.013}}.

\bibitem{Bel97}
R.~Bellman.
\newblock {\em Introduction to Matrix Analysis, Second Edition}.
\newblock Classics in Applied Mathematics. Society for Industrial and Applied
  Mathematics, 1997.
\newblock URL: \url{https://epubs.siam.org/doi/book/10.1137/1.9781611971170},
  \href {http://dx.doi.org/10.1137/1.9781611971170}
  {\path{doi:10.1137/1.9781611971170}}.

\bibitem{BenLMV15}
P.~Benner, P.~Losse, V.~Mehrmann, and M.~Voigt.
\newblock Numerical linear algebra methods for linear differential-algebraic
  equations.
\newblock In A.~Ilchmann and T.~Reis, editors, {\em Surveys in
  Differential-Algebraic Equations III}, Differential-Algebaric Equations
  Forum, chapter~3, pages 117--175. Springer-Verlag, Cham, Switzerland, 2015.

\bibitem{BoyEFB94}
S.~Boyd, L.~El~Ghaoui, E.~Feron, and V.~Balakrishnan.
\newblock {\em Linear Matrix Inequalities in System and Control Theory}.
\newblock {SIAM}, Philadelphia, PA, 1994.

\bibitem{BoyV2004}
S.~Boyd and L.~Vandenberghe.
\newblock {\em Convex Optimization}.
\newblock {Cambridge University Press}, New York, NY, USA, 2004.

\bibitem{ByeMMX08}
R.~{Byers}, D.~S. {Mackey}, V.~{Mehrmann}, and X.~{Xu}.
\newblock Symplectic, {BVD}, and palindromic eigenvalue problems and their
  relation to discrete-time control problems.
\newblock In {\em Collection of Papers Dedicated to the 60-th Anniversary of
  Mihail Konstantinov}, pages 81--102. Publ. House RODINA, Sofia, 2009.

\bibitem{FreMX02}
G.~Freiling, V.~Mehrmann, and H.~Xu.
\newblock Existence, uniqueness and parametrization of {L}agrangian invariant
  subspaces.
\newblock {\em {SIAM} J. Matrix Anal. Appl.}, 23:1045--1069, 2002.

\bibitem{Fre05}
E.~Freitag and R.~Busam.
\newblock {\em Complex Analysis}.
\newblock Springer, 1 edition, 2005.

\bibitem{GenNV99}
Y.~Genin, Y.~Nesterov, and P.~{Van D}ooren.
\newblock The analytic center of {{LMI}}'s and {{Riccati}} equations.
\newblock In {\em Control {{Conference}} ({{ECC}}), 1999 {{European}}}, pages
  3483--3487. {IEEE}, 1999.

\bibitem{GolV96}
G.~H. Golub and C.~F. {Van~Loan}.
\newblock {\em Matrix Computations}.
\newblock Johns Hopkins Univ. Press, Baltimore, 3rd edition, 1996.

\bibitem{IonOW99}
V.~Ionescu, C.~Oara, and M.~Weiss.
\newblock {\em Generalized {{Riccati Theory}} and {{Robust Control}}: {{A Popov
  Function Approach}}}.
\newblock {John Wiley \& Sons Ltd.}, {Chichester}, 1999.

\bibitem{Kal63}
R.~Kalman.
\newblock Lyapunov functions for the problem of {L}ur'e in automatic control.
\newblock {\em Proc. Nat. Acad. Sciences}, 49:201--205, 1963.

\bibitem{MagN99}
J.~R. Magnus and H.~Neudecker.
\newblock {\em Matrix Differential Calculus with Applications in Statistics and
  Econometrics}.
\newblock John Wiley, second edition, 1999.

\bibitem{Moa05}
M.~Moakher.
\newblock A {{Differential Geometric Approach}} to the {{Geometric Mean}} of
  {{Symmetric Positive}}-{{Definite Matrices}}.
\newblock {\em SIAM J. Matrix Anal. Appl.}, 26:735--747, 2005.

\bibitem{Nes13}
Y.~Nesterov.
\newblock {\em Introductory Lectures on Convex Optimization: A Basic Course}.
\newblock Applied Optimization. {Springer US}, 2013.

\bibitem{Pop73}
V.~M. Popov.
\newblock {\em Hyperstability of {{Control Systems}}}.
\newblock {Springer-Verlag New York, Inc.}, {Secaucus, NJ, USA}, 1973.

\bibitem{Sch04}
A.~J.~{van der} Schaft.
\newblock Port-{H}amiltonian systems: network modeling and control of nonlinear
  physical systems.
\newblock In {\em Advanced Dynamics and Control of Structures and Machines},
  {CISM} Courses and Lectures, Vol. 444. Springer Verlag, New York, N.Y., 2004.

\bibitem{SchJ14}
A.~J. {van der}~Schaft and D.~Jeltsema.
\newblock Port-{H}amiltonian systems theory: An introductory overview.
\newblock {\em Found. Trends Syst. Control}, 1(2-3):173--378, 2014.

\bibitem{Wil71}
J.~C. Willems.
\newblock Least squares stationary optimal control and the algebraic {R}iccati
  equation.
\newblock {\em IEEE Trans. Automat. Control}, 16(6):621--634, 1971.

\bibitem{Wil72a}
J.~C. Willems.
\newblock Dissipative dynamical systems -- {P}art {I}: {G}eneral theory.
\newblock {\em Arch. Ration. Mech. Anal.}, 45:321--351, 1972.

\bibitem{Wil72b}
J.~C. Willems.
\newblock Dissipative dynamical systems -- {P}art {II}: {L}inear systems with
  quadratic supply rates.
\newblock {\em Arch. Ration. Mech. Anal.}, 45:352--393, 1972.

\bibitem{Yak62}
V.~A. Yakubovich.
\newblock Solution of certain matrix inequalities in the stability theory of
  nonlinear control systems.
\newblock {\em Dokl. Akad. Nauk. SSSR}, 143:1304--1307, 1962.

\end{thebibliography}
\appendix

\section{Derivatives of functions of complex matrices}\label{der_complex_functions}
In this appendix we present a precise derivation of the formulas for the differentiation of a matrix function with respect to a complex matrix.
Here we distinguish between complex vector spaces $\mathbb{C}^n$ and the
corresponding real vector space  $\mathbb{R}^n + \imag\mathbb{R}^n$. Both
spaces can be identified by $\mathbf{c}: \mathbb{C}^n \rightarrow \mathbb{R}^n
+ \imag\mathbb{R}^n,\,\mathbf{c}(v) = \Re(v) + \imag \Im({v})$.
For matrix spaces of dimension $m\times n$ we use the usual identification with the vector spaces $ \mathbb{C}^n$ and $\mathbb{R}^n + \imag\mathbb{R}^n$. The space $\mathbb{C}^{n}$ is equipped with the standard scalar product $\langle x,y \rangle_{\mathbb{C}}:= x^{\mathsf{H}}y$.
By $\diffX$ we denote the differentiation in a real vector space, whereas the differentiation of a holomorphic function $g$ is denoted by $g'$. Note that if we write $\mathbf c\circ g(x) = u(x_r + \imag x_i) + \imag v(x_r + \imag x_i)$, then by the Cauchy-Riemann equations, see e.\,g.\ \cite{Fre05}, we have $\mathbf c\circ  g'(x) = \diffX[x_r] u(x_r + \imag x_i)- \imag \diffX[x_i] u(x_r + \imag x_i).
$
Then we have the following result:
\begin{lemma}
Assume that $g:  \mat{n}{n} \rightarrow \mathbb{C}$ is holomorphic. Then $f: \mat[R] nn + \imag\mat[R] nn \rightarrow \mathbb{R}$ defined  by

\begin{equation}
	f(X_r + \imag X_i) : = \Re g(X)
\end{equation}
is differentiable over $\mathbb R$ with
\begin{equation}
	\diffX f(X_r + \imag X_i) =  \Re (\overline{g'(X)}\circ \mathbf{c^{-1}})
\end{equation}
and
\begin{equation}
	\Big\langle \diffX f(X_r + \imag X_i), \Delta \Big\rangle_\mathbb{R}
	=\Re \langle \overline{g'(X)}, \mathbf{c^{-1}}(\Delta) \rangle_\mathbb{C}, \quad \Delta = \Delta_r + \imag \Delta_i.
\end{equation}
\end{lemma}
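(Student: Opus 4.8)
The plan is to reduce the statement to a single directional-derivative computation and then invoke the Cauchy--Riemann relation recorded just above the lemma. First I would dispose of differentiability: since $g$ is holomorphic on $\mat{n}{n}$, it is in particular real-analytic when $\mat{n}{n}$ is viewed as the real space $\mat[R]nn+\imag\mat[R]nn$, so $f=\Re g$ is real-differentiable and it only remains to identify its gradient. Fix $X=X_r+\imag X_i$ and a real direction $\Delta=\Delta_r+\imag\Delta_i$, and consider $t\mapsto f(X+t\Delta)$ for real $t$. Because $g$ is holomorphic, its complex derivative $g'(X)$ is $\mathbb C$-linear and, by the chain rule with holomorphic partials, $\tfrac{d}{dt}\big|_{t=0}g(X+t\Delta)=g'(X)[\Delta]$ holds already for real $t$, where I write $g'(X)[\Delta]:=\sum_{k,l}\big(g'(X)\big)_{kl}\Delta_{kl}=\tr\!\big(g'(X)^{\mathsf T}\Delta\big)$ for the complex-bilinear pairing. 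Taking real parts gives the key reduction
\[
  \diffX f(X)[\Delta]=\Re\big(g'(X)[\Delta]\big),
\]
i.e.\ the real directional derivative of $\Re g$ is the real part of the complex directional derivative.

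Next I would recast $g'(X)[\Delta]$ as a Hermitian inner product so that the gradient can be read off. Writing $g=u+\imag v$ with $u=\Re g=f$, the Cauchy--Riemann relation stated above reads $\mathbf{c}\circ g'(X)=\diffX[X_r]u-\imag\diffX[X_i]u$; conjugating, and using that $\mathbf{c}$ commutes with complex conjugation, gives that $\overline{g'(X)}$ corresponds to $\diffX[X_r]u+\imag\diffX[X_i]u$. Now the inner product on $\mat{n}{n}$ is the Hermitian one, $\langle A,B\rangle_{\mathbb C}=\tr(A^{\mathsf H}B)$, which is conjugate-linear in its first slot, whereas the holomorphic derivative pairs bilinearly. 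Placing the conjugate in the first slot reconciles the two, since $\langle \overline{g'(X)},\Delta\rangle_{\mathbb C}=\tr\!\big(g'(X)^{\mathsf T}\Delta\big)=g'(X)[\Delta]$. Substituting into the reduction yields
\[
  \diffX f(X)[\Delta]=\Re\langle \overline{g'(X)},\mathbf{c^{-1}}(\Delta)\rangle_{\mathbb C},
\]
which is the second displayed identity. The gradient formula then follows because $\langle\cdot,\cdot\rangle_{\mathbb R}=\Re\langle\cdot,\cdot\rangle_{\mathbb C}$, so the real functional $\Delta\mapsto\Re\langle\overline{g'(X)},\mathbf{c^{-1}}(\Delta)\rangle_{\mathbb C}$ is represented in the real inner product precisely by $\overline{g'(X)}$, i.e.\ $\diffX f=\Re(\overline{g'(X)}\circ\mathbf{c^{-1}})$.

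The step I expect to be the main obstacle is the bookkeeping around complex conjugation: the holomorphic derivative is naturally a complex-bilinear object, while the gradient we want lives in the real space equipped with the \emph{real part} of the Hermitian product, and these two objects differ exactly by the conjugation that turns $g'(X)$ into $\overline{g'(X)}$. Getting this conjugation into the correct slot, and checking that the $\Re/\Im$ identifications encoded by $\mathbf{c}$ are compatible with the Cauchy--Riemann relation, is where all the sign and conjugation errors would hide. Once the scalar case $n=1$ is verified directly (there $g'=\diffX[x_r]u-\imag\diffX[x_i]u$ and $\Re(g'\Delta)=\diffX[x_r]u\,\Delta_r+\diffX[x_i]u\,\Delta_i$ matches the real directional derivative of $u$), the matrix case follows verbatim through the entrywise vectorization identification, and no further analytic input is needed.
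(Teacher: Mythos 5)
Your proof is correct, and it matches the route the paper intends: the lemma is stated there without an explicit proof, immediately after the Cauchy--Riemann identity $\mathbf c\circ g'(x) = \diffX[x_r]u - \imag\diffX[x_i]u$, which is exactly the ingredient you combine with the complex chain rule, the conjugation identity $\langle \overline{g'(X)},\Delta\rangle_{\mathbb C}=\tr(g'(X)^{\mathsf T}\Delta)=g'(X)[\Delta]$, and the fact that $\langle\cdot,\cdot\rangle_{\mathbb R}=\Re\langle\cdot,\cdot\rangle_{\mathbb C}$ to read off the real gradient. Your bookkeeping of where the conjugate enters (conjugate-linear first slot of the Hermitian product versus the bilinear holomorphic pairing) is exactly right and is the only delicate point.
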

For the holomorphic function $g(X) = \det(X)$ the following fact is well-known, see e.\,g.~\cite{MagN99} for a proof in the real case, that easily extends to the complex case.
%
\begin{lemma}[Jacobi's formula]
Let $g(X) = \det(X)$ and $X\in\mat nn$. Then  $g'(X)=\adj(X^T)$ and the directional derivative of $g$ in direction $\Delta\in\mat nn$ equals
\begin{equation}
	g'(X)\circ\Delta = \tr(\adj(X)\Delta) = \langle \adj(X)^{\mathsf{H}}, \Delta \rangle_\mathbb{C}.
\end{equation}
\end{lemma}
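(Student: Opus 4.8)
The plan is to reduce the statement to the classical cofactor (Laplace) expansion of the determinant and then to translate the resulting scalar index identities into the matrix/trace language used here. Since $g(X)=\det(X)$ is holomorphic, its complex derivative agrees entrywise with the partial derivatives of the real map $X\mapsto\det(X)$ in the coordinates $X_{ij}$, so the ``extension to the complex case'' referred to in the excerpt is automatic once the entrywise formula is established; the whole argument is therefore essentially about keeping the transpose conventions straight.

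First I would fix a row index $i$ and use the expansion $\det(X)=\sum_{j}X_{ij}\,C_{ij}(X)$, where $C_{ij}(X)=(-1)^{i+j}M_{ij}(X)$ is the $(i,j)$ cofactor and $M_{ij}$ the corresponding minor. The crucial observation is that $C_{ij}(X)$ does not involve any entry of row $i$, whence $\partial\det(X)/\partial X_{ij}=C_{ij}(X)$. The matrix $g'(X)$, whose $(i,j)$ entry is exactly this partial derivative, is thus the cofactor matrix of $X$.

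Second I would identify the cofactor matrix with $\adj(X^{\mathsf{T}})$. By definition the adjugate satisfies $\adj(X)_{ij}=C_{ji}(X)$, i.e.\ $\adj(X)$ is the transpose of the cofactor matrix; a short minor computation (deleting row $j$ and column $i$ of $X^{\mathsf{T}}$ is the transpose of deleting row $i$ and column $j$ of $X$, and a determinant is invariant under transposition) then gives $\adj(X^{\mathsf{T}})_{ij}=C_{ij}(X)$. Hence $g'(X)=\adj(X^{\mathsf{T}})$, which is the first claim.

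Finally, for the directional derivative I would simply contract the entrywise formula: $g'(X)\circ\Delta=\sum_{i,j}C_{ij}(X)\,\Delta_{ij}=\sum_{i,j}\adj(X)_{ji}\,\Delta_{ij}=\tr(\adj(X)\Delta)$, using $C_{ij}=\adj(X)_{ji}$ together with the definition of the trace of a product. The last equality $\tr(\adj(X)\Delta)=\langle\adj(X)^{\mathsf{H}},\Delta\rangle_{\mathbb{C}}$ is then immediate from $\langle A,B\rangle_{\mathbb{C}}=\tr(A^{\mathsf{H}}B)$ with $A=\adj(X)^{\mathsf{H}}$. The only genuine obstacle is the bookkeeping of which matrix is transposed where---cofactor matrix versus adjugate, and $\adj(X^{\mathsf{T}})$ versus $\adj(X)$---so I would state these index conventions explicitly at the outset in order to avoid sign or transposition slips, after which every step is routine.
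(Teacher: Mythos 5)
Your proof is correct: the cofactor-expansion argument, the identification of the cofactor matrix with $\adj(X^{\mathsf{T}})$, and the final contraction $g'(X)\circ\Delta=\tr(\adj(X)\Delta)=\langle\adj(X)^{\mathsf{H}},\Delta\rangle_{\mathbb{C}}$ are all sound, and your observation that the complex case is automatic (the expansion is purely algebraic and $\det$ is a polynomial, hence holomorphic, in the entries) correctly handles the extension the paper alludes to. Note that the paper itself supplies no proof, only a citation to the real case in the literature; your argument is exactly the standard proof that citation points to, so there is nothing to compare beyond noting agreement.
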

Applying the chain-rule we finally obtain the differentiation formula, which is used throughout this paper.
\begin{corollary}
Let $f: \mat[R] nn + \imag\mat[R] nn \rightarrow \mathbb{R}$ with $f(X_r +
\imag X_i) = \ln \Re \det(X)$ and $X\in\mat nn$ with $\Re \det(X)>0$. Then
\begin{equation}
	\diffX f(X_r + \imag X_i) = \mathbf c \circ
	\left(\frac{\overline{\det(X)}}{\Re\det(X)}X^{-\mathsf H}\right).
\end{equation}
\end{corollary}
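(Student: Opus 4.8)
The plan is to reduce the Corollary to the two preceding lemmas by a chain rule in the real variable, together with careful conjugate–transpose bookkeeping. First I would write $f = \ell \circ h$ as the composition of the real-differentiable map $h(X_r + \imag X_i) := \Re\det(X)$ with the scalar logarithm $\ell(t) = \ln t$. Since $\Re\det(X) > 0$ by hypothesis, $\ell$ is differentiable at $h(X)$ with $\ell'(h(X)) = 1/\Re\det(X)$, so the ordinary chain rule for differentiation over $\mathbb{R}$ gives, for every admissible direction $\Delta = \Delta_r + \imag\Delta_i$,
\[
	\Big\langle \diffX f(X_r+\imag X_i),\, \Delta\Big\rangle_{\mathbb{R}}
	= \frac{1}{\Re\det(X)}\Big\langle\diffX h(X_r+\imag X_i),\,\Delta\Big\rangle_{\mathbb{R}}.
\]

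Next I would apply the first Lemma to the holomorphic function $g = \det$, which yields the inner-product identity
\[
	\Big\langle\diffX h(X_r+\imag X_i),\,\Delta\Big\rangle_{\mathbb{R}}
	= \Re\big\langle\overline{g'(X)},\,\mathbf{c}^{-1}(\Delta)\big\rangle_{\mathbb{C}}.
\]
To make $g'(X)$ explicit I would invoke Jacobi's formula, $g'(X) = \adj(X^{\mathsf{T}})$. Because $\Re\det(X)>0$ forces $\det(X)\neq 0$, the matrix $X$ is invertible and $\adj(X^{\mathsf{T}}) = \det(X)\,X^{-\mathsf{T}}$; conjugating and using $\overline{X^{-\mathsf{T}}} = X^{-\mathsf{H}}$ then gives $\overline{g'(X)} = \overline{\det(X)}\,X^{-\mathsf{H}}$. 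Substituting into the two displays above leaves
\[
	\Big\langle\diffX f(X_r+\imag X_i),\,\Delta\Big\rangle_{\mathbb{R}}
	= \Re\Big\langle\tfrac{\overline{\det(X)}}{\Re\det(X)}\,X^{-\mathsf{H}},\,\mathbf{c}^{-1}(\Delta)\Big\rangle_{\mathbb{C}}
	\quad\text{for all }\Delta.
\]

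The final step is to recognize the right-hand side as a real inner product. By the definition of the Frobenius product in Section~\ref{sec:prelim} we have $\langle\cdot,\cdot\rangle_{\mathbb{R}} = \Re\langle\cdot,\cdot\rangle_{\mathbb{C}}$ under the identification $\mathbf{c}$, so the complex matrix $M := \tfrac{\overline{\det(X)}}{\Re\det(X)}\,X^{-\mathsf{H}}$ satisfies $\langle\mathbf{c}(M),\,\Delta\rangle_{\mathbb{R}} = \Re\langle M,\,\mathbf{c}^{-1}(\Delta)\rangle_{\mathbb{C}}$ for every $\Delta$. Since the gradient over $\mathbb{R}$ is uniquely determined by its action against all directions, this identifies $\diffX f = \mathbf{c}\circ M$, which is exactly the claimed formula.

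I expect the only genuinely delicate part to be the conjugate–transpose bookkeeping in passing from $g'(X) = \adj(X^{\mathsf{T}})$ to $\overline{g'(X)} = \overline{\det(X)}\,X^{-\mathsf{H}}$, and in keeping the two different roles of $\Re$ separate: the $\Re$ acting on the scalar $\det(X)$ inside $h$, versus the $\Re$ supplied by the first Lemma that converts a complex gradient into its real representation. Everything else is a routine use of the chain rule and of the relation $\langle\cdot,\cdot\rangle_{\mathbb{R}} = \Re\langle\cdot,\cdot\rangle_{\mathbb{C}}$.
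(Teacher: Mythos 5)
Your proof is correct and follows exactly the route the paper intends: the paper's own justification is precisely ``apply the chain rule'' to the two preceding lemmas, and you have filled in those details (chain rule through $\ln$, the first lemma applied to $g=\det$, Jacobi's formula with $\adj(X^{\mathsf{T}})=\det(X)X^{-\mathsf{T}}$ for invertible $X$, and the conjugation to get $\overline{\det(X)}\,X^{-\mathsf{H}}$) faithfully. The identification of the gradient via nondegeneracy of $\langle\cdot,\cdot\rangle_{\mathbb{R}}=\Re\langle\cdot,\cdot\rangle_{\mathbb{C}}$ is also exactly what the paper's notation $\mathbf{c}\circ(\cdot)$ encodes, so there is nothing to add.
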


\section{Differences between continuous-time and discrete-time systems}\label{appendix:cayley}
Usually, statements for a continuous linear time-invariant system can be
transformed back and forth to discrete-time systems using some bilinear
transform. However, the equations determining the analytic center in both
cases are cubic in $X$, which suggests that there might not be a one-to-one
correspondence. We have shown that the eigenvalues of the feedback system matrix $A_{F_c}$ at the analytic center lie on the imaginary axis in the continuous-time case, whereas they lie inside the unit disk in the discrete-time setting.  In this appendix we show that it is indeed necessary to consider the continuous-time and discrete-time case separately by showing that the three equations determining the analytic center are not preserved under the usual bilinear transformations.

\subsection{Bilinear transformations}
The bilinear transformation
$s=(z-1)/(z+1)$ maps every  asymptotically  stable continuous-time system
$\{ A_c,B_c,C_c,D_c \}$ to a corresponding asymptotically stable discrete-time system
$\{ A_d,B_d,C_d,D_d \}$.
For some $Q_c, Q_d\in\mat nn$ and $R_c = D_c + D_c^{\mathsf{H}},\,R_d = D_d + D_d^{\mathsf{H}}$ set
\begin{equation}
W_c := \left[ \begin{array}{cc} Q_c & C_c^{\mathsf{H}} \\ C_c & R_c \end{array} \right], \qquad W_d := \left[ \begin{array}{cc} Q_d & C_d^{\mathsf{H}} \\ C_d & R_d \end{array} \right].
\end{equation}
Then, starting from a continuous-time system $\{ A_c,B_c,C_c,D_c \}$ we obtain a transformed discrete-time system by setting
\begin{align}
\begin{split}
	\label{bilin}
	A_d & :=  (A_c-I)^{-1}(I+A_c) \\
	B_d & :=  \sqrt{2}(A_c-I)^{-1}B_c \\
	T_c &:=
	\left[ \begin{array}{cc} \sqrt{2}(I-A_c)^{-1} & (I-A_c)^{-1}B_c
	\\ 0 & I \end{array}\right],\\
	W_d & :=  T^{\mathsf{H}}_cW_cT_c,
\end{split}
		\end{align}
		where $C_d, D_d,$ and $Q_d$ are obtained from $W_d$.
		Vice versa, starting from a discrete-time system $\{ A_d,B_d,C_d,D_d \}$ and using the inverse transformation $z=(1+s)/(1-s)$ we obtain a continuous-time system by setting
		\begin{align}
			A_c & :=  (A_d+I)^{-1}(A_d-I) \nonumber\\
			B_c & :=  \sqrt{2}(A_d+I)^{-1}B_d \nonumber \\
			T_d &:=
			\left[ \begin{array}{cc} \sqrt{2}(I+A_d)^{-1} & -(I+A_d)^{-1}B_d
			\\ 0 & I \end{array}\right],\\
			W_c & :=  T^{\mathsf{H}}_dW_dT_d\nonumber.
			\label{bilininv}
		\end{align}
		Note that $(I-A_d) =  2(I-A_c)^{-1}$.

		Bilinear transformations preserve asymptotic stability, and they also relate the domains of the continuous-time and discrete-time linear matrix
		inequalities. To see this, we express the two LMIs as
		\begin{align}
			W_c(X) &:= \left[ \begin{array}{cc} Q_c & C_c^{\mathsf{H}}\\ C_c & R_c \end{array} \right]
			- \left[\begin{array}{cc} A^{\mathsf{H}}_c & I \\ B^{\mathsf{H}}_c & 0 \end{array} \right]
			\left[\begin{array}{cc} 0 & X_c \\ X_c & 0 \end{array} \right]
			\left[\begin{array}{cc} A_c & B_c \\ I & 0 \end{array} \right]\succeq0,\\
			W_d(X) &:= \left[ \begin{array}{cc} Q_d & C_d^{\mathsf{H}} \\ C_d & R_d \end{array} \right]
			- \left[\begin{array}{cc} A^{\mathsf{H}}_d & I \\ B^{\mathsf{H}}_d & 0 \end{array} \right]
			\left[\begin{array}{cc} X_d & 0 \\ 0 & -X_d \end{array} \right]
			\left[\begin{array}{cc} A_d & B_d \\ I & 0 \end{array} \right]\succeq0,
		\end{align}
		respectively. Since
		\[
			\left[\begin{array}{cc} 0 & X_c \\ X_c & 0 \end{array} \right] =
			\left[\begin{array}{cc} I & I \\ I & -I \end{array} \right]
			\left[\begin{array}{cc} \frac{X_c}{2} & 0 \\
			0 & -\frac{X_c}{2} \end{array} \right]
			\left[\begin{array}{cc} I & I \\ I & -I \end{array} \right],
		\]
		we can also express $W_c(X_c)$ as
		\[
			W_c(X_c)=	\left[ \begin{array}{cc} Q_c & C_c^{\mathsf{H}} \\ C_c & R_c \end{array} \right]
			- \left[\begin{array}{cc} A_c+I & B_c \\ A_c-I & B_c \end{array} \right]^{\mathsf{H}}
			\left[\begin{array}{cc} \frac{X_c}{2} & 0 \\
			0 & -\frac{X_c}{2} \end{array} \right]
			\left[\begin{array}{cc} A_c+I & B_c \\ A_c-I & B_c \end{array} \right].
		\]
		Applying the congruence transformation $T_c$ defined in (\ref{bilin}), then
			\[ T^{\mathsf{H}}_cW_c(X_c)T_c= \left[ \begin{array}{cc} Q_d & C^{\mathsf{H}}_d \\
				C_d & R_d \end{array} \right]
				- \left[\begin{array}{cc} A^{\mathsf{H}}_d & I \\ B^{\mathsf{H}}_d & 0 \end{array} \right]
				\left[\begin{array}{cc} X_d & 0 \\ 0 & -X_d \end{array} \right]
				\left[\begin{array}{cc} A_d & B_d \\ I & 0 \end{array} \right],
			\]
			with $A_d, B_d, C_d, D_d$ and $Q_d$ defined as in
			(\ref{bilin}). This shows that maximizing $\det
			W_d(X_d)$ over $X_d$ and maximizing $\det W_c(X_c)$ over $X_c$ is
			equivalent. Thus, the  respective solutions at the
			continuous-time and discrete-time analytic center coincide, \ie $X_d=X_c$.
			The bilinear transformation also
			preserves the solution of the Riccati equation as well as the domain of
			the linear matrix inequality.
			For the 	transformation of the matrices $C_c$, $D_c$, and $Q_c$ we obtain
			\begin{equation}\label{eq:transform_weights}
				\begin{bmatrix}
					Q_d & C_d^{\mathsf{H}}\\
					C_d & D_d + D_d^{\mathsf{H}}
				\end{bmatrix}=
				T_c^{\mathsf{H}}
				\begin{bmatrix}
					Q_c & C_c^{\mathsf{H}}\\
					C_c & D_c + D_c^{\mathsf{H}}
				\end{bmatrix}T_c=
				T_c^{\mathsf{H}}
				\begin{bmatrix}
					\sqrt2 Q_c (I-A_c)^{-1} & Q_c (I-A_c)^{-1} B_c + C_c^{\mathsf{H}}\\
					\sqrt2 C_c (I-A_c)^{-1} & C_c (I-A_c)^{-1}B_c + D_c + D_c^{\mathsf{H}}
				\end{bmatrix}
			\end{equation}
			where the $(1,1)$, $(1,2)$, $(2,2)$ blocks are given by
			\begin{align}
&	2 (I-A_c)^{-\mathsf{H}}Q_c (I-A_c)^{-1},\\
& \sqrt2 (I-A_c)^{-\mathsf{H}}Q_c (I-A_c)^{-1} B_c +
\sqrt2
(I-A_c)^{-\mathsf{H}}C_c^{\mathsf{H}},\\
&	
(I-A_c)^{-1}B_c +B_c^{\mathsf{H}}(I-A_c)^{-\mathsf{H}}Q_c(I-A_c)^{-1} B_c +
B_c^{\mathsf{H}}(I-A_c)^{-\mathsf{H}}C_c^{\mathsf{H}} +
D_c + D_c^{\mathsf{H}},
			\end{align}
			respectively.	

			The transfer function also does not change, provided that one rephrases it in terms of the new variable, \ie $\Phi_d(z)=\Phi_c(s)$. This can be seen as follows. Let us replace the variable $z$ of the system matrix $S_{W_d}(z)$ by $(1+s)/(1-s)$ and then scale the first
			block row and block column by $(1-s)$ and transform the second block row and block column by the upper triangular congruence transformation $T_d$, which does not change the transfer function, then we obtain
				\begin{align} \left[ \begin{array}{c|c} (1-s)I_n & 0  \\ \hline 0 & T_d^{\mathsf{H}} \end{array} \right]
&	\left[ \begin{array}{c|cc} 0 & A_d-zI_n & B_d \\ \hline
zA^{\mathsf{H}}_d -I_n & Q_d  & C^{\mathsf{H}}_d  \\  zB^{\mathsf{H}}_d & C_d  & R_d   \end{array} \right] \left[ \begin{array}{c|c} (1-s)I_n & 0 \\ \hline 0 & T_d \end{array} \right]\\
& \qquad =
\left[ \begin{array}{c|cc} 0 & A_c-sI_n & B_c \\ \hline
A^{\mathsf{H}}_c +sI_n & Q_c  & C^{\mathsf{H}}_c  \\  B^{\mathsf{H}}_c & C_c  & R_c   \end{array} \right].
\end{align}

\subsection{Transformation of the deflating subspaces}
Following \cite{ByeMMX08} we consider the pencils
\begin{equation}\label{eq:evepenc}
	s\E_c-\A_c  =
	\begin{bmatrix}
		0                 & -sI+A_{c} & B_{c}     \\
		sI+ A_{c}^{\mathsf{H}}         &   Q_c   & C_{c}^{\mathsf{H}}    \\
		B_{c}^{\mathsf{H}}             & C_{c}   & D_{c} +D_{c}^{\mathsf{H}}
	\end{bmatrix}  
\end{equation}
corresponding to the continuous-time case and 
\begin{equation}\label{eq:palpenc}
	z\A_d^{\mathsf{H}}-\A_d =
	\begin{bmatrix}
		0                   & zI-A_{d}          & -B_{d}            \\
		zA_{d}^{\mathsf{H}}-I            & (z-1)Q_d        & (z-1)  C_{d}^{\mathsf{H}}      \\
		zB_{d}^{\mathsf{H}}            & (z-1)C_{d}     & (z-1)(D_{d} +D_{d}^{\mathsf{H}})
	\end{bmatrix}
\end{equation}
corresponding to the discrete-time case.

If $X$ is solution of $\Ricc_d(X)=-Q_d$, then there is a deflating subspace of
the form
\begin{align}
& \begin{bmatrix}
	0                   & A_d - zI          & B_d            \\
	I -zA_d^{\mathsf{H}}            & (z-1)Q_d        & (z-1)C_d^{\mathsf{H}}         \\
	-zB_d^{\mathsf{H}}            & (z-1) C_d    & (z-1)(D_d +D_d^{\mathsf{H}})
\end{bmatrix}
\begin{bmatrix}
	-X\left(I - A_d + B_dF_d \right)\\I\\
	-F_d
\end{bmatrix}\\
& \qquad =
\begin{bmatrix}
	I\\
	(I-A_d^{\mathsf{H}})X\\
	-B_d^{\mathsf{H}}X
\end{bmatrix}
\left( A_d - B_dF_d - zI\right).
\end{align}
Applying a generalized bilinear transformation to the pencil $s\E_c -\A_c$ gives
\begin{align}
& z \hat{\A}_d - \hat{\A}^{\mathsf{H}}_d := z(\E_c-\A_c) - (-\E_c-\A_c) \\
& \qquad =
\begin{bmatrix}
	0                   	& z(A_{c}-I) - (I+A_{c})          & zB_{c} - B_{c}            \\
	-z(I +A_{c}^{\mathsf{H}}) - (A_{c}^{\mathsf{H}} - I)  & (z-1)Q_c       & (z-1)C_{c}^{\mathsf{H}}        \\
	-zB_{c}^{\mathsf{H}} - B_{c}^{\mathsf{H}}            & (z-1)  C_{c}    & (z-1)(D_{c} +D_{c}^{\mathsf{H}})
\end{bmatrix},
\end{align}
and then performing the bilinear transform from the previous section on the last two
block columns and rows, we obtain the new  pencil
\begin{equation}
	z\check{\A_{d}}- \check{\A_{d}}^{\mathsf{H}} :=
	\begin{bmatrix}
		\frac1{\sqrt2}I & 0 \\
		0 & T_c^{\mathsf{H}}
	\end{bmatrix}
	\left(
	z \hat{\A_{d}} - \hat{\A_{d}}^{\mathsf{H}}\right)
	\begin{bmatrix}
		\frac1{\sqrt2} I & 0 \\
		0 & T_c
	\end{bmatrix}
	=
	\begin{bmatrix}
		0                   & A_d - zI          & B_d            \\
		I -zA_d^{\mathsf{H}}            & (z-1)Q_d        & (z-1)C_d^{\mathsf{H}}      \\
		-zB_d^{\mathsf{H}}            & (z-1)C_d      & (z-1)(D_d +D_d^{\mathsf{H}})
	\end{bmatrix}.
\end{equation}
If, conversely, there is a continuous-time solution $X$ of $\Ricc_c(X)=-Q_c$, we
have the deflating subspace
\begin{equation}
	\begin{bmatrix}
		0                 & -sI+A_{c} & B_{c}     \\
		sI+ A_{c}^{\mathsf{H}}         & Q_{c}     & C_{c}^{\mathsf{H}}     \\
		B_{c}^{\mathsf{H}}             &  C_{c}  & D_{c} +D_{c}^{\mathsf{H}}
	\end{bmatrix}
	\begin{bmatrix}
		-X\\I\\
		-F_c
	\end{bmatrix}
	=
	\begin{bmatrix}
		I\\
		X\\
		0
	\end{bmatrix}
	\left( A_{c}- B_{c}F_c - sI\right).
\end{equation}
Then, using the same transformation we obtain
\begin{align}
& \left(z\check{\A_{d}}- \check{\A_{d}}^{\mathsf{H}}\right)
\begin{bmatrix}
	\sqrt2 I & 0 \\
	0 & T_c^{-1}
\end{bmatrix}
\begin{bmatrix}
	-X\\I\\
	-F_c
\end{bmatrix}
(I-A_{c}+B_{c}F_c)^{-1}\\
& \qquad=
\begin{bmatrix}
	\frac1{\sqrt2}I & 0 \\
	0 & T_c^{\mathsf{H}}
\end{bmatrix}
\begin{bmatrix}
	I\\
	X\\
	0
\end{bmatrix}
\left( z(-I + A_{c}- B_{c}F_c) - (I+ A_{c} - B_{c}F_c)\right)(I-A_{c}+B_{c}F_c)^{-1},
\end{align}
which is equivalent to
\begin{equation}
	\left(z\check{\A_{d}}- \check{\A_{d}}^{\mathsf{H}}\right)
	\begin{bmatrix}
		- X (I-A_{F_d})\\
		I\\
		-\sqrt2 F_c(I-A_{c}+B_{c}F_c)^{-1}
	\end{bmatrix}
	=
	\begin{bmatrix}
		I\\
		(I-A_d)^{\mathsf{H}}X\\
		-B_d^{\mathsf{H}}X
	\end{bmatrix}
	\left(A_{F_d} -zI\right),
\end{equation}
where $A_{F_d}$ denotes the bilinear transform of the matrix $A_{F_c} = A_{c}-B_{c}F_c$.
Thus, the transformed feedback matrix $F_d$ can be defined by
\begin{equation}\label{eq:feedback_cayley}
	 F_d := \sqrt2
	F_c(I-A_{c}+B_{c}F_c)^{-1}.
\end{equation}
%

It needs to be analyzed how the Riccati operator $P_c(X)$ is transformed for a fixed $X$. Clearly, then $X$ fulfills the Riccati equation $\Ricc_c(X) = -Q_c$ with $Q_c:= - P_c(X)$. From the equations $\Ricc_c(X) = -Q_c$ and $\Ricc_d(X) = -Q_d$ one would then expect, that $P_d(X) = - Q_d$. However,
we have the relation
\begin{equation}\label{eq:relationpcpd}
	\begin{bmatrix}
		P_d & 0 \\
		0 & R_d - B_d^{\mathsf{H}}XB_d
	\end{bmatrix}
	=
	\left(
		T_P
	\right)^{\mathsf{H}}
	\begin{bmatrix}
		P_c & 0 \\
		0 & R_c
	\end{bmatrix}
	\underbrace{
		\begin{bmatrix}
			I & 0 \\
			F_c & I
		\end{bmatrix}
		\begin{bmatrix}
			\sqrt{2}(I-A_c)^{-1} & (I-A_c)^{-1}B_c
			\\ 0 & I
		\end{bmatrix}
		\begin{bmatrix}
			I & 0 \\
			-F_d & I
	\end{bmatrix}}_{T_P:=},
\end{equation}
where we compute \begin{equation}
	T_P =
	\begin{bmatrix}
		\sqrt2 (I - A_{c}+B_{c}F_{c})^{-1}& (I - A_{c})^{-1}B_{c}\\
		0 & I + F_{c}(I - A_{c})^{-1}B_{c}
	\end{bmatrix},
\end{equation}
and used that $\sqrt2 F_{c}(I - A_{c})^{-1} - F_d -F_{c}(I - A_{c})^{-1}B_{c}F_d=0$.
We thus obtain that
\begin{equation}
	P_d = 2 (I - A_{c}+B_{c}F_{c})^{-\mathsf{H}}P_c (I - A_{c}+B_{c}F_{c})^{-1},
\end{equation}
which, by considering that $P_c\succ 0$ and equation \eqref{eq:transform_weights}, only coincides with $-Q_d$ if $F_{c}=0$.
Thus we have shown, that if we enforce a feedback, that keeps the feedback
system matrix $A_{F_{d}}$ on the unit circle, then the transformed residual
of
the Riccati operator $P_c$ does not correspond to the discrete-time residual
$P_d$.
In other words, since relation \eqref{eq:relationpcpd} has to hold, the
transformation of the feedback \eqref{eq:feedback_cayley} cannot be true, and
thus the discrete-time feedback system matrix $A_{F_{d}}$ does not lie on the
unit circle. Indeed, as mentioned before, the eigenvalues lie strictly inside
the unit circle.

			\end{document}